\newtheorem{thm}{Theorem}[section]
\newtheorem{rmk}{Remark}[section]
\newtheorem{lem}{Lemma}[section]
\newtheorem{prop}{Proposition}[section]
\newtheorem{e.g.}[thm]{Example}
\begin{document}
\title{Asymptotic Stability of Solutions for 1-D Compressible Navier-Stokes-Cahn-Hilliard system
}
\author{ Yazhou Chen$^1$%\thanks{\scriptsize{chenyz@mail.buct.edu.cn,supported by the National Natural Science Foundation of China(No.11371348, No.11671027)}}
,
 Qiaolin He$^2$%\thanks{\scriptsize{qlhejenny@scu.edu.cn,supported by the National Natural Science Foundation of China(No.11671027)}}
,
 Ming Mei$^{3,4}$, Xiaoding Shi$^{1}$\thanks{\scriptsize{Corresponding author, shixd@mail.buct.edu.cn}}  \\
 \scriptsize{$^{1}$Department of Mathematics, School of Science, Beijing University of Chemical Technology, Beijing, 100029, China}\\
  \scriptsize{$^2$School of Mathematics, Sichuan University, Chengdu, 610064, China}\\
 \scriptsize{$^{3}$Department of Mathematics, Champlain College St.-Lambert, St.-Lambert, Quebec, J4P 3P2, Canada}\\
 \scriptsize{$^{4}$Department of Mathematics and Statistics, McGill University, Montreal, Quebec,H3A 2K6, Canada }}
\date{} \maketitle
\noindent\textbf{Abstract}. This paper is concerned with the evolution of the periodic boundary value problem and the mixed boundary value problem for a compressible  mixture of binary fluids modeled by the  Navier-Stokes-Cahn-Hilliard system in one dimensional space.  The global existence and the large time behavior of the strong solutions for these two systems are studied. The solutions are proved to be asymptotically stable even for the large initial disturbance of the density and the large velocity data. We show that the average concentration difference for the two components of the initial state determines the long time behavior of the diffusive interface for the two-phase flow.

\

\noindent{\bf Keywords:} asymptotic stability, compressible viscous fluid, Navier-Stokes equations, Cahn-Hilliard equations,  diffusive interface

\

\noindent{\bf MSC:} 35Q31; 35B65

\section {\normalsize Introduction and Main Result}
\setcounter{equation}{0}
The evolution for the compressible mixture of binary fluids (e.g. foams, solidification processes, fluid-gas interface etc.)  is one of the fundamental problems in hydrodynamical science, and it is attracting more and more attention with the development of modern science and technology.   As is well known, the flow of such fluid can be described by the well-known diffusive interface system of equations nonlinearly coupled the Navier-Stokes and Cahn-Hilliard equations, called Navier-Stokes-Cahn-Hilliard system \cite{AF2008,DingLi2013,LT-1998} as follows:
\begin{equation}\label{3d-NSCH}
\left\{\begin{array}{llll}
\displaystyle \partial_t\rho+\textrm{div}(\rho \mathbf{u})=0,  \\
\displaystyle \partial_t(\rho \mathbf{u})+\textrm{div}(\rho \mathbf{u}\otimes \mathbf{u})=\textrm{div}\mathbb{T},\\
\displaystyle\partial_t\big(\rho\chi\big)+\mathrm{div}(\rho \chi \mathbf{u})=\Delta\mu,\\
\displaystyle\rho\mu-\frac{1}{\epsilon}\rho\frac{\partial f}{\partial \chi}+\epsilon \Delta\chi\in\partial I(\chi),
\end{array}\right.
\end{equation}
where  the unknown functions $\rho$, $\mathbf{u}$, $\mu$ and $\chi$ denote the total density, the mean velocity,   the chemical potential and the
  difference of the two components for the fluid mixture respectively.
More precisely, the  difference of two components  for the fluid mixture is $\chi=\chi_1-\chi_2$, where $\chi_i=\frac{M_i}{M}$, and $M_i$  denotes the mass concentration of the fluid $i~(i=1,2)$ and  mass of the components in the representative material volume $V$ respectively. The total density is given by $\rho=\rho_1+\rho_2$, where $\rho_i=\frac{M_i}{V}$ denotes the apparent mass density of the fluid $i$. The mean velocity $\mathbf{u}$ is defined as the average velocity given by $\rho \mathbf{u}=\rho_1\mathbf{u}+\rho_2 \mathbf{u}$. $\partial I(\cdot)$ is the subdifferential of the indicator function $I(\cdot)$ of the set $[-1,1]$. The Cauchy stress-tensor is represented by
\begin{equation}\label{Cauchy stress-tensor}
  \mathbb{T}=\mathbb{S}-\epsilon\big(\nabla\chi\otimes\nabla\chi-\frac{|\nabla\chi|^2}{2}\mathbb{I}\big),
\end{equation}
 and the conventional Newtonian viscous stress is
\begin{equation}\label{conventional Newtonian viscous stress}
  \mathbb{S}=\nu(\chi)\big((\nabla \mathbf{u}+\nabla^{\top}\mathbf{u})-\frac{2}{3}\mathrm{div}\mathbf{u}\mathbb{I}\big)-p\mathbb{I}+\eta(\chi)\mathrm{div}\mathbf{u}\mathbb{I},
\end{equation}
 where $\mathbb{I}$ is the unit matrix, $\nu(\chi)>0, \ \eta(\chi)\geq0$ are defined as viscosity coefficients, $\epsilon>0$ is the thickness of the diffuse interface of the fluid mixture, and
 $f=f(\chi)$ is the potential free energy density. We suppose that $f$ satisfies the Ginzburg-Landau double-well potential non-smooth model which is widely used in  \cite{BSSW2012, BE1991, GW2012, HGW2011, OP1988, WX2017,Yin1994, Zheng1986} and the references therein:
\begin{equation}\label{the formula of free energy density}
f(\chi)=\left\{\begin{array}{llll}
 \displaystyle\frac{1}{4}\big(\chi^2-1\big)^2,\quad & \mathrm{if}\ -1\leq\chi\leq1,\\
 \displaystyle+\infty,\quad & \mathrm{if}\ |\chi|>1.
 \end{array}\right.
\end{equation}
Physically, the pressure $p$ is given by
\begin{equation}\label{the formula of pressure}
   p=a\rho^{\gamma},\ \ \gamma\geq1,
\end{equation}
where $a$ is a positive constant,  $\gamma\geq1$ is the adiabatic constant.
\begin{rmk} Compared to the  momentum equation in compressible Navier-Stokes system, An additional force $-\epsilon\big(\nabla\chi\otimes\nabla\chi-\frac{|\nabla\chi|^2}{2}\mathbb{I}\big)$ is added  in the Cauchy stress tensor, which describes the capillary effect associated with free energy
\begin{equation}\label{simplified total energy}
   E_{\mathrm{free}}(\rho,\chi)=\int_\Omega\big(\frac{\rho }{\epsilon} f(\chi)+\frac{\epsilon}{2}|\nabla\chi|^2\big)dx.
\end{equation}
To avoid the difficulty of the estimation of density gradient, \eqref{simplified total energy} was first presented by Abels-Feireisl \cite{AF2008} which is a simplified model of the following total free energy \begin{equation}\label{original total energy}
   \tilde{E}_{\mathrm{free}}(\rho,\chi)=\int_\Omega\big(\frac{\rho }{\epsilon} f(\chi)+\frac{\rho\epsilon}{2}|\nabla\chi|^2\big)dx,\notag
\end{equation}
proposed by Lowengrub and Truskinovsky in \cite{LT-1998}.
The potential \eqref{the formula of free energy density} is the polynomial non-smooth approximation of the so-called logarithmic potential  suggested by  Cahn-Hilliard \cite{CH1958}
\begin{equation}\label{the logarithmic potential}
 f(\chi)=\frac{1}{2}\theta\Big((1-\chi)\ln(\frac{1-\chi}{2})+(1+\chi)\ln(\frac{1+\chi}{2})\Big)-\frac{\theta_c}{2}\chi^2,\notag
\end{equation}
where $\theta$ and $\theta_c>0$ are positive constants. It follows that $f$ is  convex on $(-1,1)$ for $\theta>\theta_c$, and
has the bistable form for $\theta<\theta_c$.
\end{rmk}
Now let us draw the background  on the well-posedness of the solutions for the system \eqref{3d-NSCH}. We begin by recalling the compressible Navier-Stokes system.
In 1980, Matsumura-Nishida \cite{MN1980} first proved the global existence of smooth solutions for Navier-Stokes equations    for initial data close to a non-vacuum equilibrium. Later, Kawashima-Matsumura \cite{KM1985}, Matsumura-Nishihara \cite{MN1985}-\cite{MN1986} obtained the asymptotic stability for the rarefaction waves and shock waves  in one-dimensional successively. We also refer to the papers due
to Huang-Li-Matsumura \cite{HLM2010}, Huang-Matsumura \cite{HM2009}, Huang-Matsumura-Xin \cite{HMX2006}, Huang-Wang-Wang-Yang \cite{HWWY2015}, Shi-Yong-Zhang \cite{SYZ2016}-\cite{SYZ2017} and the references therein, in  which the asymptotic stability on  the superposition of nonlinear waves are studied.

In the case of diffusive interface model for two immiscible fluids,
in order  to understand the motion of the interfaces between immiscible fluids, phase field models for mixtures of two constituents were studied over
the past century which can be traced to van der Waals \cite{V1894}.  Cahn-Hilliard \cite{CH1958} first replaced the sharp interface into
the narrow transition layer across which the fluids may mix, and the famous Cahn-Hilliard equations was proposed to describe these diffusive
 interfaces between two immiscible fluids,  Lowengrub-Truskinovsky \cite{LT-1998}  added the effect of the motion of the particles
 and the interaction with the diffusion into Cahn-Hilliard quation, and put forward the Navier-Stokes-Cahn-Hilliard equations.
 The similar result was established by Abels-Feireisl \cite{AF2008},
%Abels-Garcke-Gr$\ddot{u}$n \cite{AGG2017},
 Anderson-Mcfadden-Wheeler \cite{AMW1998} and the references therein. Heida-M$\mathrm{\mathrm{\acute{a}}}$lek-Rajagopal \cite{HMR2012} and Kotschote \cite{Kotschote2016} generalized these models to non-isentropic case.
The global existence of weak solutions in three dimensional was  obtained by Abels-Feireisl \cite{AF2008},
 the  method they used is the framework  which was introduced by Leray \cite{Leray1934} and Lions \cite{Lions1998}. Kotschote-Zacher \cite{KZ2015} established a local existence and uniqueness result for strong solutions. Ding-Li \cite{DingLi2013} proved the global existence of strong solution  in a bounded domain with initial boundary condition \eqref{mixed Boundary} in one dimension.

In this paper, on the basis of  the previous work outlined above, we begin to study the large time behavior for the solutions of the system \eqref{3d-NSCH} in $\Omega\times(0,+\infty)$
\begin{equation}\label{Euler-NSCH}
\left\{\begin{array}{llll}
\displaystyle \rho_t+(\rho u)_x=0,  \\
\displaystyle \rho u_t+\rho uu_x+p_x=\nu u_{xx}-\frac{\epsilon}{2}\big(\chi_x^2\big)_x,\\
\displaystyle\rho\chi_t+\rho u\chi_x=\mu_{xx},\\
\displaystyle  \rho\mu-\frac{\rho}{\epsilon}(\chi^3-\chi)+\epsilon\chi_{xx}\in\partial I(\chi),
\end{array}\right. \end{equation}
where $\Omega\subseteq\mathbb{R}, t\in(0,+\infty)$. We study only two model cases for \eqref{Euler-NSCH}:

\begin{enumerate}
\item[(i)]  $L$-periodic  boundary case, that is $\Omega=\mathbb{R}$, and
\begin{equation}\label{periodic boundary}
\left\{\begin{array}{llll}
(\rho,u,\chi)(x,t)=(\rho,u,\chi)(x+L,t),\ \ & x\in\mathbb{R},t>0,\\
(\rho,u,\chi)\big|_{t=0}=(\rho_0,u_0,\chi_0),\ \ & x\in\mathbb{R}.
\end{array}\right.
\end{equation}

\item[(ii)] Mixed  boundary case, that is $\Omega=[0,L]$, and
\begin{equation}\label{mixed Boundary}
\left\{\begin{array}{llll}
\big(u,\chi_x,\mu_x\big)\big|_{x=0,L}=(0,0,0),\ \ &t\geq0,\\
(\rho,u,\chi)\big|_{t=0}=(\rho_0,u_0,\chi_0),\ \ & x\in[0,L].
\end{array}\right.
\end{equation}
\end{enumerate}

We denote by $C$ and $c$ the positive generic constants  without confusion throughout this paper.
  $L^2$ denotes the space of Lebesgue measurable functions on $\mathbb{R}$ which are square integrable, with the norm $\|f\|=(\int_{0}^L|f|^2)^{\frac{1}{2}}$.
 $H^l(l\geq0)$ denotes the Sobolev space of $L^2$-functions $f$ on $\mathbb{R}$ whose derivatives $\partial^j_x f,  j=1,\cdots,l$ are $L^2$
 functions too, with the norm
$  \|f\|_l=(\sum_{j=0}^l\|\partial^j_x f\|^2)^{\frac{1}{2}}$.
Let
\begin{equation}\label{average value}
 \bar \rho=\frac{1}{L}\int_0^L\rho_0dx, \quad \overline{\rho u}=\frac{1}{L}\int_0^L\rho_0 u_0dx, \quad \overline{\rho\chi}=\frac{1}{L}\int_0^L\rho_0\chi_0dx,\quad  \bar u=\frac{\overline{\rho u}}{\bar\rho}, \quad  \bar\chi=\frac{\overline{\rho\chi}}{\bar\rho},
\end{equation}
and then, for the both cases above, we have
\begin{equation}\label{average}
 \int_0^L(\rho-\bar\rho)dy=0,\ \int_0^L(\rho u-\overline{\rho u})dy=0,\ \int_0^L(\rho\chi-\overline{\rho\chi})dy=0.
\end{equation}
For the periodic boundary condition (i), because the periodic solutions $(\rho,u,\chi)(x,t)$ of \eqref{Euler-NSCH} with the period $L$ in the whole space $\mathbb{R}$ can
be regarded as $L$-periodic extensions of that on $[0,L]$, we only need to consider
the system \eqref{Euler-NSCH} on the bounded interval $[0,L]$. We introduce the Hilbert space $L^2_{\mathrm{per}}(\mathbb{R})$ of locally square integrable
functions that are periodic with the period $L$:
\begin{equation}\label{periodic function space}
L^2_{\mathrm{per}}(\mathbb{R})=\Big\{g(x)\big|g(x+L)=g(x)\ \mathrm{for\ all}\ x\in\mathbb{R},\ {\mathrm{and}\ } g(x)\in L^2(0,L) \Big\}
\end{equation}
with the norm denoted also by $\|\cdot\|$ (without confusion) which is given by the integral over $[0,L]$, $\|g\|=(\int_0^L|g(x)|^2dx )^{\frac{1}{2}}$.
$H_{\mathrm{per}}^l(\mathbb{R}) \ (l\geq0)$ denotes the Sobolev space of $L_{\mathrm{per}}^2(\mathbb{R})$-functions $g$ on $\mathbb{R}$ whose derivatives $\partial^j_x g,  j=1,\cdots,l$ are $L_{\mathrm{per}}^2$
 functions too, with the norm
$  \|g\|_l=(\sum_{j=0}^l\|\partial^j_x g\|^2)^{\frac{1}{2}}$.
Throughout this paper, the initial and boundary data for the density, velocity and concentration difference of two components are assumed to be:
\begin{equation}\label{initial data of density}
\left.\begin{array}{llll}
 \qquad(\rho_0,u_0)\in H_{\mathrm{per}}^2,\ (\mathrm{periodic\ boundary\ case\  (i)}),\\
 \mathrm{\ or\ } \rho_0\in H^2([0,L]), u_0\in H^1_0([0,L])\cap H^2([0,L]),\ (\mathrm{mixed\ boundary\  case\  (ii)}),
 \end{array}\right.
\end{equation}
\begin{equation}\label{initial data}
\left.\begin{array}{llll}
  \quad\chi_0\in H_{\mathrm{per}}^4,\ (\mathrm{periodic\  case\  (i)}),\\
  \mathrm{or\ }\chi_0\in H^4([0,L]),\ (\mathrm{mixed\ boundary\  case\  (i)}),
\end{array}\right.
\end{equation}
\begin{equation}\label{initial data-1}
 \inf_{x\in[0,L]}\rho_0>0,\  \chi_0\in[-1,1],
\end{equation}
\begin{equation}\label{Compatibility condition rho}
\rho_{t}(x,0)=-\rho_{0x}u_0-\rho_0u_{0x},
\end{equation}
\begin{equation}\label{Compatibility condition u}
u_{t}(x,0)=\frac{\nu}{\rho_0}u_{0xx}-u_0u_{0x}-\frac{p'_\rho(\rho_0)}{\rho_0}\rho_{0x}-\frac{\epsilon}{\rho_{0x}}\chi_{0x}\chi_{0xx},
\end{equation}
\begin{eqnarray}\label{Compatibility condition chi}
\chi_{t}(x,0)&=&-u_0\chi_{0x}-\frac{\epsilon\chi_{0xxxx}}{\rho_0^2}+\frac{2\epsilon\rho_{0x}}{\rho_0^3}\chi_{xxx}+\epsilon\Big(\frac{\rho_{0xx}}{\rho_0^3}
-\frac{2\rho_{0x}^2}{\rho_0^4}\Big)\chi_{0xx}\notag\\
&&+\frac{(3\chi_0^2-1)\chi_{0xx}}{\epsilon\rho_0}
+\frac{6\chi_0\chi_{0x}^2}{\epsilon\rho_0}.
\end{eqnarray}
 Our main result is that  we show  the average concentration difference for the two components of the initial state determines the long time behavior of the diffusive interface for the two-phase flow. This leads to a separation of two regions  for the initial concentration difference: regions $A_{\textrm{stable}}$ and $A_{\textrm{unstable}}$, which correspond to the  stable and  unstable regions.
\begin{equation}\label{stable interval and unstable interval}
  A_{\textrm{stable}}=(-\infty,\frac{\sqrt{3}}{3})\cup(\frac{\sqrt{3}}{3},+\infty),\ \ A_{\textrm{unstable}}=(-\frac{\sqrt{3}}{3},\frac{\sqrt{3}}{3}).
\end{equation}
\begin{rmk}
We rewrite  \eqref{Euler-NSCH}$_{3,4}$ as follows
\begin{eqnarray}\label{Cahn-Hilliard equation}
\chi_{t}+u\chi_{x}+\frac{\epsilon\chi_{xxxx}}{\rho^2}-\frac{2\epsilon\rho_{x}}{\rho^3}\chi_{xxx}
-\epsilon\Big(\frac{\rho_{xx}}{\rho^3}-\frac{2\rho_{x}^2}{\rho^4}\Big)\chi_{xx}
-\frac{3\chi^2-1}{\epsilon\rho}\chi_{xx}
-\frac{6}{\epsilon\rho}\chi\chi_{x}^2\in\partial I(\chi),
\end{eqnarray}
It should be pointed out that the reason for the classification of stable and unstable regions is just that  the energy estimates(please refer to \eqref{the basic energy equality-2 for  concentration difference} and \eqref{the basic energy inequality-2 for density velocity and concentration difference}) ask for the coefficient $\frac{3\chi^2-1}{\epsilon\rho}$ in parabolic equation \eqref{Cahn-Hilliard equation}  should be positive. If $\chi\in A_{\mathrm{unstable}}$, it is thus physically unstable and mathematically ill-posed for the
two-phase flow, and the phase separation will occur in this region.
\end{rmk}
Now we will give the global existence and asymptotic stability results  as follows.

 \begin{thm}[Periodic boundary case]  \label{main thm-1}
Assume that $(\rho_0,u_0,\chi_0)$ satisfies   the periodic boundary \eqref{periodic boundary} with the regularities given in \eqref{initial data of density}-\eqref{Compatibility condition chi},  then there exists a unique global strong solution $(\rho,u,\chi)$ of the system \eqref{Euler-NSCH}-\eqref{periodic boundary}, satisfying that, for any $T>0$, there exists positive constants $m,M>0$ such that
\begin{eqnarray}\label{global solution for periodic boundary}
&&\rho\in L^\infty(0,T,H_{\mathrm{per}}^2),\rho_t\in L^\infty(0,T;H_{\mathrm{per}}^1),\notag\\
&&u\in L^\infty(0,T;H_{\mathrm{per}}^2)\cap L^2(0,T;H_{\mathrm{per}}^3), u_t\in L^\infty(0,T;H_{\mathrm{per}}^1),\notag\\
&&\chi\in L^\infty(0,T;H_{\mathrm{per}}^4), \chi_t\in L^\infty(0,T;L_{\mathrm{per}}^2)\cap L^2(0,T;H_{\mathrm{per}}^2),\\
&&\mu\in L^\infty(0,T;H_{\mathrm{per}}^2)\cap L^2(0,T;H_{\mathrm{per}}^4),\ \mu_t\in L^2(0,T;L_{\mathrm{per}}^2),\notag\\
 &&  \chi\in[-1,1],\ m\leq\rho\leq M,\ \mathrm{for \ all}\ (x,t)\in\mathbb{R}\times[0,T].\notag
\end{eqnarray}
 Furthermore, if $\bar{\chi}\in A_{\textrm{stable}}\cap[-1,1]$, then there exists a small positive constant $\varepsilon_1$, such that if
\begin{equation}\label{small condition for periodic boundary}
\|\chi_0-\bar\chi\|_4^2\leq \varepsilon_1,
\end{equation}
then the  system \eqref{Euler-NSCH}-\eqref{periodic boundary} admits a unique  global  solution $(\rho,u,\chi)(x,t)$ in $\mathbb{R}\times[0,\infty)$ which satisfies
\begin{eqnarray}\label{asymptotic behavior for periodic boundary}
&&\rho\in L^\infty(0,+\infty,H_{\mathrm{per}}^2),\rho_t\in L^\infty(0,+\infty;H_{\mathrm{per}}^1),\notag\\
&&u\in L^\infty(0,+\infty;H_{\mathrm{per}}^2)\cap L^2(0,+\infty;H_{\mathrm{per}}^3), u_t\in L^\infty(0,+\infty;H_{\mathrm{per}}^1),\notag\\
&&\chi\in L^\infty(0,+\infty;H_{\mathrm{per}}^4), \chi_t\in L^\infty(0,+\infty;L_{\mathrm{per}}^2)\cap L^2(0,+\infty);H_{\mathrm{per}}^2),\\
&&\mu\in L^\infty(0,+\infty;H_{\mathrm{per}}^2)\cap L^2(0,+\infty;H_{\mathrm{per}}^4),\ \mu_t\in L^2(0,+\infty;L_{\mathrm{per}}^2),\notag\\
 &&  \chi\in[-1,1],\ \ \mathrm{for \ all}\ (x,t)\in\mathbb{R}\times[0,+\infty), \notag
\end{eqnarray}
and moreover
\begin{equation}\label{asymptotic stability}
\lim_{t\rightarrow\infty}\sup_{x\in\mathbb{R}}\big|(\rho-\bar\rho,u-\bar u,\chi-\bar\chi)\big|=0.
\end{equation}
\end{thm}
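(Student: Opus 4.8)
The plan is to establish the theorem in two stages: first the global existence statement \eqref{global solution for periodic boundary} on an arbitrary finite interval $[0,T]$, and then the asymptotic stability \eqref{asymptotic behavior for periodic boundary}--\eqref{asymptotic stability} under the smallness hypothesis \eqref{small condition for periodic boundary}, using a continuation argument anchored by a global-in-time \emph{a priori} estimate. For the local existence I would fix a linearization of \eqref{Euler-NSCH}: treat $\rho$ via the transport equation \eqref{Euler-NSCH}$_1$ (so $m\le\rho\le M$ is propagated from \eqref{initial data-1} as long as $\|u_x\|_{L^1_t L^\infty_x}$ is controlled), solve the parabolic momentum equation \eqref{Euler-NSCH}$_2$ for $u$, and solve the fourth-order Cahn--Hilliard equation in the form \eqref{Cahn-Hilliard equation} for $\chi$, with the constraint $\chi\in[-1,1]$ handled through the subdifferential $\partial I(\chi)$ exactly as in Ding--Li \cite{DingLi2013} and Kotschote--Zacher \cite{KZ2015}. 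A fixed-point argument in the function space dictated by \eqref{global solution for periodic boundary}, using the compatibility conditions \eqref{Compatibility condition rho}--\eqref{Compatibility condition chi} to guarantee the regularity of time derivatives at $t=0$, yields a unique local strong solution; periodicity is preserved throughout.

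The heart of the matter is the \emph{a priori} estimates, which I would organize hierarchically. The basic energy law comes from testing \eqref{Euler-NSCH}$_2$ with $u$, \eqref{Euler-NSCH}$_3$ with $\mu$, and using the relation \eqref{Euler-NSCH}$_4$ together with $\langle\partial I(\chi),\chi_t\rangle\ge0$, which produces the dissipation $\nu\|u_x\|^2+\|\mu_x\|^2$ balanced against the decay of $\int(\frac{1}{\gamma-1}a\rho^{\gamma}+\frac12\rho u^2+\frac{\rho}{\epsilon}f(\chi)+\frac{\epsilon}{2}\chi_x^2)\,dx$. Higher-order estimates then proceed: (a) an $L^\infty$ bound on $\rho$ via the effective-viscous-flux / Bresch--Desjardins type argument or, in 1-D, by direct integration along characteristics; (b) $H^1$ and $H^2$ control of $u$ by differentiating \eqref{Euler-NSCH}$_2$ and testing with $u_t$, $u_{xx}$; (c) the crucial gain of regularity for $\chi$ up to $H^4$ by exploiting the parabolic structure of \eqref{Cahn-Hilliard equation}, where the term $\frac{3\chi^2-1}{\epsilon\rho}\chi_{xx}$ must have favorable sign — this is precisely where $\bar\chi\in A_{\textrm{stable}}$ enters, since by the conservation relation \eqref{average} and the smallness \eqref{small condition for periodic boundary}, $\chi$ stays uniformly bounded away from the unstable band $(-\frac{\sqrt3}{3},\frac{\sqrt3}{3})$, making $3\chi^2-1$ strictly positive and rendering the fourth-order equation genuinely dissipative rather than backward-parabolic. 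The various cross terms are absorbed by Gagliardo--Nirenberg, Young, and the already-established lower-order bounds, closing the estimate on $[0,T]$ for every $T$.

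For the large-time behavior, I would show the dissipation terms are integrable in time: $\int_0^\infty(\|u_x\|_1^2+\|\chi_t\|^2+\|\mu_x\|_2^2+\|\chi-\bar\chi\|_?^2)\,dt<\infty$, obtained by integrating the energy inequalities and using that the total energy is nonincreasing and bounded below. Combined with uniform bounds on time derivatives of these quantities (from the higher-order estimates), a standard argument — $\int_0^\infty g\,dt<\infty$ and $g'\in L^1_{loc}$ with uniform bound implies $g(t)\to0$ — gives $\|u_x\|_{H^1}\to0$, $\|\chi_t\|\to0$, and then via the elliptic relation \eqref{Euler-NSCH}$_4$ and Poincaré-type inequalities (valid because of the mean-value constraints \eqref{average}), $\|\rho-\bar\rho\|_1\to0$, $\|u-\bar u\|_1\to0$, $\|\chi-\bar\chi\|\to0$; Sobolev embedding $H^1\hookrightarrow L^\infty$ in 1-D upgrades these to the uniform convergence \eqref{asymptotic stability}. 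The main obstacle I anticipate is step (c) of the \emph{a priori} estimates: controlling the $H^4$ norm of $\chi$ uniformly in time while simultaneously keeping $\chi$ pinned in the stable region requires a delicate bootstrap in which the smallness of $\|\chi_0-\bar\chi\|_4$ must be shown to persist — one must prove $\|\chi-\bar\chi\|_4$ remains of size $O(\varepsilon_1)$ for all time, so that the coefficient $3\chi^2-1$ never degenerates, and this self-improving estimate together with the non-smooth constraint $\partial I(\chi)$ (whose contribution must be shown to vanish near the equilibrium, since $|\bar\chi|<1$) is the technically most demanding part.
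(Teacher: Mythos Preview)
Your overall strategy---local existence plus uniform a priori estimates plus continuation, with the sign of $3\chi^2-1$ furnishing the decisive dissipation for $\chi$---matches the paper's architecture, and you have correctly located the role of $\bar\chi\in A_{\textrm{stable}}$. However, the paper's execution differs from yours in two concrete respects that are worth knowing.

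First, the paper does \emph{not} treat the differential inclusion $\partial I(\chi)$ directly. Instead it builds, following Blowey--Elliott, an explicit $C^2$ penalization $f_\lambda$ of the double-obstacle potential (adding quadratic and cubic tails outside $[-1,1]$), proves local existence and all the a priori estimates for the smooth approximate system with $f_\lambda$, and only at the very end passes $\lambda\to 0$. The bound $\chi\in[-1,1]$ is recovered by testing the approximate $\mu$-equation against $\beta_\lambda(\chi_\lambda)=\lambda(\chi-\chi^3+\partial_\chi f_\lambda)$ and showing $\|\beta_\lambda\|_{L^2_{t,x}}\le C\lambda$, whence $\beta(\chi)=0$ in the limit. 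This sidesteps the monotone-operator machinery you invoke and keeps every computation classical; your direct subdifferential route is legitimate but is not what the paper does.

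Second, for the pointwise bounds $0<\underline m\le\rho\le\overline M$ the paper uses neither characteristics nor an effective-viscous-flux argument, but Kanel's method: after deriving an $L^\infty_t$ bound on $\int\rho|(\tfrac1\rho)_x|^2\,dx$ (obtained by multiplying the momentum equation, rewritten via $u_{xx}=\rho\,\tfrac{d}{dt}(\tfrac1\rho)_x+\rho u(\tfrac1\rho)_{xx}$, by $(\tfrac1\rho)_x$), one introduces $\Psi(\rho)=\int_{\bar\rho}^\rho s^{-3/2}\sqrt{G(s)}\,ds$ and shows $|\Psi(\rho)|\le\|G(\rho)\|_{L^1}^{1/2}\|\rho^{1/2}(\tfrac1\rho)_x\|_{L^2}^{1/2}\le C$, while $\Psi(\rho)\to\pm\infty$ as $\rho\to 0^+,+\infty$. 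This is sharper than a Gronwall-along-characteristics bound because it gives $T$-independent constants (needed for the asymptotic part). Your higher-order and large-time arguments are otherwise in line with the paper's.
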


For the mixed initial boundary  value problem,  we have the following results concerning the asymptotic behavior of the global classical solutions.

 \begin{thm}[Mixed boundary case]  \label{main thm-2}
Assume that $(\rho_0,u_0,\chi_0)$ satisfies  the mixed boundary \eqref{mixed Boundary} with the regularities given in \eqref{initial data of density}-\eqref{Compatibility condition chi},  then
there exists a unique global strong solution $(\rho,u,\chi)$ of the system \eqref{Euler-NSCH},\eqref{mixed Boundary},
 satisfying that, for any $T>0$, there exists positive constants $m,M>0$ such that
\begin{eqnarray}\label{global solution for mixed boundary}
&&\rho\in L^\infty(0,T,H^2),\rho_t\in L^\infty(0,T;H^1),\notag\\
&&u\in L^\infty(0,T;H_0^1\cap H^2)\cap L^2(0,T;H^3), u_t\in L^\infty(0,T;H^1),\notag\\
&&\chi\in L^\infty(0,T;H^4), \chi_t\in L^\infty(0,T;L^2)\cap L^2(0,T;H^2),\\
&&\mu\in L^\infty(0,T;H^2)\cap L^2(0,T;H^4),\ \mu_t\in L^2(0,T;L^2),\notag\\
 &&  \chi\in[-1,1],\ m\leq\rho\leq M,\ \mathrm{for \ all}\ (x,t)\in[0,L]\times[0,T].\notag
\end{eqnarray}
 Furthermore, if $\bar{\chi}\in A_{\textrm{stable}}\cap[-1,1]$, then there exists a small positive constant $\varepsilon_1$, such that if
\begin{equation}\label{small condition}
\|\chi_0-\bar\chi\|_4^2\leq \varepsilon_1,
\end{equation}
then the  system \eqref{Euler-NSCH}-\eqref{mixed Boundary} admits a unique  global  solution $(\rho,u,\chi)(x,t)$ in $[0,L]\times[0,\infty)$ which satisfies
\begin{eqnarray}\label{rho u chi for mixed boundary}
&&\rho\in L^\infty(0,+\infty,H^2),\rho_t\in L^\infty(0,+\infty;H^1),\notag\\
&&u\in L^\infty(0,+\infty;H_0^1\cap H^2)\cap L^2(0,+\infty;H^3), u_t\in L^\infty(0,+\infty;H^1),\notag\\
&&\chi\in L^\infty(0,+\infty;H^4), \chi_t\in L^\infty(0,+\infty;L^2)\cap L^2(0,+\infty);H^2),\\
&&\mu\in L^\infty(0,+\infty;H^2)\cap L^2(0,+\infty;H^4),\ \mu_t\in L^2(0,+\infty;L^2),\notag\\
 &&  \chi\in[-1,1],\ \ \mathrm{for \ all}\ (x,t)\in[0,L]\times[0,+\infty), \notag
\end{eqnarray}
and moreover
\begin{equation}\label{asymptotic stability for mixed boundary}
\lim_{t\rightarrow\infty}\sup_{x\in[0,L]}\big|(\rho-\bar\rho,u-\bar u,\chi-\bar\chi)\big|=0.
\end{equation}
\end{thm}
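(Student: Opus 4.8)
The proof follows the classical route of local well-posedness, (uniform) a priori estimates, and continuation, in parallel with the proof of Theorem~\ref{main thm-1}; the genuinely new point is the bookkeeping of the boundary terms produced by the mixed condition $(u,\chi_x,\mu_x)|_{x=0,L}=(0,0,0)$ in place of $L$-periodicity. Wherever the solution keeps $\chi$ strictly inside $(-1,1)$ one has $\partial I(\chi)=\{0\}$, so \eqref{Euler-NSCH}$_{4}$ reduces to the pointwise identity $\mu=\frac{1}{\epsilon}(\chi^{3}-\chi)-\frac{\epsilon}{\rho}\chi_{xx}$ and \eqref{Euler-NSCH}$_{3}$ turns into the fourth-order quasilinear parabolic equation \eqref{Cahn-Hilliard equation}; writing $\phi=\rho-\bar\rho$ and $\psi=\chi-\bar\chi$ I reformulate the system as an evolution problem for $(\phi,u,\psi)$ with the induced boundary data.

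For the first assertion I would construct, by a linearization/iteration (contraction) argument in the energy spaces of \eqref{global solution for mixed boundary}, a unique local strong solution on some $[0,T_{0}]$, the compatibility conditions \eqref{Compatibility condition rho}--\eqref{Compatibility condition chi} ensuring $\rho_{t}(\cdot,0)\in H^{1}$, $u_{t}(\cdot,0)\in H^{1}$, $\chi_{t}(\cdot,0)\in L^{2}$ with the correct boundary behaviour and $\chi_{0}\in[-1,1]$ consistent with the subdifferential inclusion. On any finite interval $[0,T]$ one then derives a priori bounds for $\|\rho\|_{H^{2}}$, $\|u\|_{H^{2}}$, $\|\chi\|_{H^{4}}$ and $0<m(T)\le\rho\le M(T)$: the density bounds use the one-dimensional structure (integrating the continuity equation along particle paths controls $\rho$ through $\int_{0}^{T}\|u_{x}\|_{L^{\infty}}\,dt$), and the rest come from successive energy estimates on the differentiated equations; continuation then extends the solution to $[0,\infty)$ with the stated regularity.

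For the asymptotic stability assertion, take $\bar\chi\in A_{\textrm{stable}}\cap[-1,1]$ and $\|\chi_{0}-\bar\chi\|_{4}^{2}\le\varepsilon_{1}$. The starting point is the basic energy law, obtained by testing the momentum equation with $u$, using continuity, and testing \eqref{Euler-NSCH}$_{3}$ with $\mu$; all boundary terms cancel because $u|_{x=0,L}=0$ and $\mu_{x}|_{x=0,L}=\chi_{x}|_{x=0,L}=0$, and after removing the affine-in-$\rho$ part via \eqref{average} it reads
\[
\frac{d}{dt}\int_{0}^{L}\Big(\tfrac12\rho u^{2}+\tfrac{a}{\gamma-1}\rho^{\gamma}+\tfrac{\rho}{\epsilon}f(\chi)+\tfrac{\epsilon}{2}\chi_{x}^{2}\Big)dx+\int_{0}^{L}\big(\nu u_{x}^{2}+\mu_{x}^{2}\big)dx=0.
\]
Since $f''(\bar\chi)=3\bar\chi^{2}-1>0$ exactly when $\bar\chi\in A_{\textrm{stable}}$, the density-weighted potential $\tfrac{\rho}{\epsilon}f(\chi)$ is strictly convex near $\bar\chi$, so this functional controls $\|\phi\|^{2}+\|u\|^{2}+\|\psi\|_{1}^{2}$ and keeps $\chi$ in a small neighbourhood of $\bar\chi$ inside $(-1,1)$, justifying the reduction $\partial I(\chi)=0$. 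I would then close a bootstrap for the uniform functional $N(t)=\|\phi\|_{2}^{2}+\|u\|_{2}^{2}+\|\psi\|_{4}^{2}+\|\rho_{t}\|_{1}^{2}+\|u_{t}\|_{1}^{2}+\|\chi_{t}\|^{2}$ against a dissipation $D(t)$ collecting $\|u_{x}\|_{2}^{2}$, $\|\mu_{x}\|^{2}$, $\|\chi_{t}\|_{2}^{2}$ and cognate terms: (a) parabolic estimates for $\psi,\psi_{x},\dots,\psi_{xxxx}$ from \eqref{Cahn-Hilliard equation}, the crucial point being the positivity of the coefficient $\tfrac{3\chi^{2}-1}{\epsilon\rho}$ (valid because $\chi$ stays near $\bar\chi\in A_{\textrm{stable}}$), which together with the fourth-order dissipation controls the linearized Cahn--Hilliard operator and absorbs the lower-order terms; (b) energy estimates for $u_{x},u_{xx},u_{t}$ from the momentum equation, with the capillary term $-\tfrac{\epsilon}{2}(\chi_{x}^{2})_{x}$ absorbed into the $\psi$-norms; (c) $H^{2}$-control of $\phi$ and $H^{1}$-control of $\rho_{t}$ from the continuity equation, weighted estimates (testing the momentum equation against a primitive of $\rho-\bar\rho$, using \eqref{average}) to produce the $\phi$-part of the dissipation, and the time-uniform bounds $0<m\le\rho\le M$. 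Summing with small weights and using \eqref{small condition} gives $\sup_{t\ge0}N(t)+\int_{0}^{\infty}D(t)\,dt\le C\big(N(0)+\varepsilon_{1}\big)$, hence the uniform regularity \eqref{rho u chi for mixed boundary}.

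Finally, \eqref{asymptotic stability for mixed boundary} follows because the uniform estimate yields $\int_{0}^{\infty}\big(\|\phi\|_{1}^{2}+\|u\|_{1}^{2}+\|\psi\|_{2}^{2}\big)\,dt<\infty$: for $u$ this is immediate from $D$; for $\psi$ one uses $\int_{0}^{\infty}\|\mu_{x}\|^{2}\le C$ together with the formula for $\mu$, Poincar\'e's inequality, and $\int_{0}^{L}(\rho\chi-\overline{\rho\chi})\,dx=0$; for $\phi$ it is the weighted estimate from step (c) combined with $\int_{0}^{L}(\rho-\bar\rho)\,dx=0$. Since $N$ is uniformly bounded, the time derivatives of $\|\phi\|_{1}^{2}+\|u\|_{1}^{2}+\|\psi\|_{2}^{2}$ are bounded, so the elementary lemma ``$g\in L^{1}(0,\infty)$ with bounded derivative $\Rightarrow g(t)\to0$'' gives $\|\phi\|_{1}^{2}+\|u\|_{1}^{2}+\|\psi\|_{2}^{2}\to0$, and the one-dimensional embedding $H^{1}\hookrightarrow L^{\infty}$ yields \eqref{asymptotic stability for mixed boundary}. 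The main obstacle is the uniform a priori estimate in (a)--(c): keeping $\rho$ bounded above and away from zero over the infinite time horizon (which genuinely uses the one-dimensional structure), handling the variable-coefficient fourth-order operator in \eqref{Cahn-Hilliard equation} whose terms $\tfrac{\rho_{x}}{\rho^{3}}\chi_{xxx}$ and $\big(\tfrac{\rho_{xx}}{\rho^{3}}-\tfrac{2\rho_{x}^{2}}{\rho^{4}}\big)\chi_{xx}$ couple the $\chi$-estimates to the $H^{2}$-bound on $\rho$ and require careful interpolation, and controlling the boundary contributions in the higher-order estimates, which do not all vanish outright (for instance $\chi_{xxx}|_{x=0,L}$ is tied to $\rho_{x}\chi_{xx}/\rho$ at the endpoints through $\mu_{x}|_{x=0,L}=\chi_{x}|_{x=0,L}=0$) and must be absorbed by trace and interpolation inequalities. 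The sign condition $3\bar\chi^{2}-1>0$, i.e. $\bar\chi\in A_{\textrm{stable}}$, is precisely what prevents the linearized potential term from destabilizing the $\psi$-energy, in agreement with the remark containing \eqref{Cahn-Hilliard equation}.
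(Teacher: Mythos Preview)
Your overall scheme---local existence, a priori estimates leveraging $3\bar\chi^{2}-1>0$, continuation, and decay via $\int_{0}^{\infty}D\,dt<\infty$ together with bounded time derivatives---is the route the paper takes for Theorem~\ref{main thm-1} and then invokes verbatim for Theorem~\ref{main thm-2}. Two concrete ingredients are missing, however, and without them the argument does not close. First, the time-uniform bound $0<m\le\rho\le M$: integrating along characteristics gives only $T$-dependent bounds through $\int_{0}^{T}\|u_{x}\|_{L^{\infty}}$, and you yourself flag the uniform bound as an unresolved obstacle. The paper's device is Kanel's: multiply the momentum equation by $(1/\rho)_{x}$ and add to the basic energy to obtain $\sup_{t}\int_{0}^{L}\rho\,|(1/\rho)_{x}|^{2}+\sup_{t}\int_{0}^{L}G(\rho)\le C_{0}$ independently of $T$; then $\Psi(\rho)=\int_{\bar\rho}^{\rho}s^{-3/2}\sqrt{G(s)}\,ds$ satisfies $|\Psi(\rho(x,t))|\le\big(\int G\big)^{1/2}\big(\int\rho|(1/\rho)_{x}|^{2}\big)^{1/2}\le C_{0}$ while $\Psi(\rho)\to\mp\infty$ as $\rho\to 0^{+},+\infty$, which forces uniform pointwise bounds on $\rho$. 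Your proposed multiplier ``a primitive of $\rho-\bar\rho$'' is a different estimate and does not by itself deliver this. Second, the constraint $\chi\in[-1,1]$: you reduce to $\partial I(\chi)=\{0\}$ by assuming $\chi$ stays strictly inside $(-1,1)$, which is justified only a posteriori under the smallness \eqref{small condition} and not at all for the large-data finite-$T$ assertion. The paper handles the differential inclusion by penalization: replace $f$ by a $C^{2}$ approximation $f_{\lambda}$ equal to $f$ on $[-1,1]$, derive all estimates for \eqref{penalized-NSCH} uniformly in $\lambda$, and test \eqref{penalized-NSCH}$_{4}$ against $\beta_{\lambda}(\chi_{\lambda})$ to get $\|\beta_{\lambda}\|_{L^{2}(0,T;L^{2})}\le C\lambda\to 0$, whence $\beta(\chi)=0$ and $\chi\in[-1,1]$ in the limit. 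Your plan has no mechanism producing this for general data.

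A smaller correction: the basic energy functional does not control $\|\psi\|^{2}=\|\chi-\bar\chi\|^{2}$ through ``convexity of $\tfrac{\rho}{\epsilon}f(\chi)$ near $\bar\chi$,'' since the stored term is $\rho f(\chi)$, not a relative entropy of $\chi$ around $\bar\chi$. In the paper the $L^{2}$ control of $\chi-\bar\chi$ and the extra dissipation $\int_{0}^{T}\|\chi_{x}\|^{2}$ come from a separate test---multiplying \eqref{Euler-NSCH}$_{3}$ by $\chi$---which yields $\tfrac{1}{2}\tfrac{d}{dt}\int\rho\chi^{2}+\tfrac{1}{\epsilon}\int(3\chi^{2}-1)\chi_{x}^{2}+\epsilon\int\rho^{-1}\chi_{xx}^{2}\le 0$; the sign condition $3\chi^{2}-1>0$ is used precisely here, not inside the basic energy law.
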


\begin{rmk} In above theorems, the solutions for two different boundary cases are proved to be asymptotically stable even for the large initial disturbance of the density and the large velocity data.
\end{rmk}

The key point of the proof for the main theorems is to construct some special energy functions and the approximate systems for \eqref{Euler-NSCH}  to get the upper and lower bound
 of the  density and the concentration difference.  The outline of this paper is organized as follows. In section 2, we   construct an approximate system for \eqref{Euler-NSCH}, and obtain the local existence of the solutions for  the approximate system. In section 3, we obtain the desired a priori estimates on strong solutions of the approximate system  and  take the limit for the approximate system, then give the proof of the main theorem 1.1. Moreover,  we generalized the Theorem 1.1 to the mixed boundary value problem \eqref{Euler-NSCH} and \eqref{mixed Boundary}, and  get the proof of theorem 1.2.

\section {\normalsize Local Existence and Uniqueness}
\setcounter{equation}{0}
In this section, we will give the local existence and uniqueness of the solution for the periodic boundary problem \eqref{Euler-NSCH}-\eqref{periodic boundary}. For this purpose,
 we should construct a family of smooth approximate functions for the non-smooth free energy density $f$ \eqref{the formula of free energy density} firstly. This process is strongly motivated by the work of Blowey-Elliott \cite{BE1991}.   The polynomial part of the free energy density   $\frac{1}{4}\big(\chi^2-1\big)^2$ is replaced by the twice continuously differentiable function $f_\lambda$  $(0<\lambda<1)$:
\begin{equation}\label{the approximate formula of free energy}
 f_\lambda(\chi)=\left\{\begin{array}{llll}
 \displaystyle\frac1{2\lambda}(\chi-(1+\frac\lambda2))^2+\frac{1}{4}\Big(\chi^2-1\Big)^2+\frac\lambda{24},
& \chi\geq1+\lambda, \vspace{2mm} \\
\displaystyle\frac{1}{4}\Big(\chi^2-1\Big)^2+\frac{1}{6\lambda^2}(\chi-1)^3,
& 1<\chi\leq1+\lambda, \vspace{2mm}\\
 \displaystyle\frac{1}{4}\Big(\chi^2-1\Big)^2, &  -1\leq\chi\leq1, \vspace{2mm} \\
\displaystyle\frac{1}{4}\Big(\chi^2-1\Big)^2-\frac{1}{6\lambda^2}(\chi+1)^3,
& -1-\lambda<\chi\leq-1, \vspace{2mm} \\
\displaystyle\frac1{2\lambda}(\chi+(1+\frac\lambda2))^2+\frac{1}{4}\Big(\chi^2-1\Big)^2+\frac\lambda{24},
& \chi\leq-1-\lambda.
 \end{array}\right.
\end{equation}
Directly  by simple calculation, one gets
\begin{equation}\label{the inequlity of f-lambda}
   f_\lambda(\chi)\geq\frac{1}{4}\Big(\chi^2-1\Big)^2\geq0,\  \forall\   0<\lambda<1.
\end{equation}
Moreover, one has
\begin{equation}\label{the derivative formula of f-lambda}
\frac{ \partial f_\lambda}{\partial\chi}(\chi)=
 \left\{\begin{array}{llll}
 \displaystyle\frac{1}{\lambda}\big(\chi-(1+\frac{\lambda}{2})\big)+\chi^3-\chi,\ & \chi\geq1+\lambda, \vspace{2mm}\\
\displaystyle\frac{1}{2\lambda^2}(\chi-1)^2+\chi^3-\chi,\ &1<\chi<1+\lambda, \vspace{2mm} \\
 \displaystyle \chi^3-\chi, &  -1\leq\chi\leq1, \vspace{2mm} \\
\displaystyle-\frac{1}{2\lambda^2}(\chi+1)^2+\chi^3-\chi,\ & -1-\lambda<\chi\leq-1, \vspace{2mm} \\
\displaystyle\frac{1}{\lambda}\big(\chi+(1+\frac{\lambda}{2})\big)+\chi^3-\chi,\ & \chi\leq-1-\lambda.
 \end{array}\right.
\end{equation}
\begin{equation}\label{the inequality-1 of f-lambda}
   \chi \frac{\partial f_\lambda}{\partial\chi}(\chi)\geq\chi^4-\chi^2,\  \chi^3 \frac{\partial f_\lambda}{\partial\chi}(\chi)\geq\chi^6-\chi^4,\ \forall\   0<\lambda<1.
\end{equation}
and
\begin{equation}\label{the second derivative formula of f-lambda}
\frac{ \partial^2 f_\lambda}{\partial\chi^2}(\chi)=
 \left\{\begin{array}{llll}
 \displaystyle\frac{1}{\lambda}+3\chi^2-1,\ & \chi\geq1+\lambda, \vspace{2mm}\\
\displaystyle\frac{1}{\lambda^2}(\chi-1)+3\chi^2-1,\ &1<\chi<1+\lambda, \vspace{2mm} \\
 \displaystyle 3\chi^2-1, &  -1\leq\chi\leq1, \vspace{2mm} \\
\displaystyle-\frac{1}{\lambda^2}(\chi+1)+3\chi^2-1,\ & -1-\lambda<\chi\leq-1, \vspace{2mm} \\
\displaystyle\frac{1}{\lambda}+3\chi^2-1,\ & \chi\leq-1-\lambda.
 \end{array}\right.
\end{equation}
We define the function $\beta_\lambda\in C^1(\mathbb{R})$ as follows
\begin{equation}\label{the approximate formula of beta-lambda}
 \beta_\lambda(\chi)=\lambda\Big(\chi-\chi^3+\frac{\partial f_\lambda}{\partial \chi}\Big)=
 \left\{\begin{array}{llll}
 \displaystyle\chi-(1+\frac{\lambda}{2}),\ & \chi\geq1+\lambda, \vspace{2mm}\\
\displaystyle\frac{1}{2\lambda}(\chi-1)^2,\ &1<\chi<1+\lambda, \vspace{2mm} \\
 \displaystyle 0, &  -1\leq\chi\leq1, \vspace{2mm} \\
\displaystyle-\frac{1}{2\lambda}(\chi+1)^2,\ & -1-\lambda<\chi\leq-1, \vspace{2mm} \\
\displaystyle\chi+(1+\frac{\lambda}{2}),\ & \chi\leq-1-\lambda.
 \end{array}\right.
\end{equation}

\begin{lem}
Suppose that $0<\lambda<1$, then $\beta_\lambda$ satisfies the following

1) $\beta_\lambda$ is a Lipschitz continuous function and $0\leq\beta_\lambda\leq1$.

2)\begin{equation}\label{the limit of beta-lambda}
\beta(\chi):=\lim_{\lambda\rightarrow0}\beta_\lambda(\chi)=
\left\{\begin{array}{llll}
 \displaystyle\chi-1,& \chi>1,\\
 \displaystyle 0, &  -1\leq\chi\leq1,\\
\displaystyle\chi+1,& \chi<-1.
 \end{array}\right.
 \end{equation}

3) $\beta(\chi)$ is a Lipschitz continuous function, and
$$|\beta(\chi)-\beta_{\lambda}(\chi)|\leq\frac{1}{2}\lambda, \ \  |\beta(\chi_1)-\beta(\chi_2)|\leq|\chi_1-\chi_2|.$$
\end{lem}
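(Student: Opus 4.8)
The plan is to verify the three assertions directly from the explicit piecewise formula \eqref{the approximate formula of beta-lambda}, since $\beta_\lambda$ is given in closed form on each of the five intervals. \textbf{Part 1).} For Lipschitz continuity it suffices to bound $\beta_\lambda'$ on each piece and to check that the one-sided derivatives match at the break points $\chi=\pm1$ and $\chi=\pm(1+\lambda)$. On $\chi\geq 1+\lambda$ we have $\beta_\lambda'=1$; on $1<\chi<1+\lambda$ we have $\beta_\lambda'=\tfrac1\lambda(\chi-1)\in(0,1)$; on $[-1,1]$ it is $0$; and the two left pieces are odd reflections of these, so in all cases $0\leq\beta_\lambda'\leq 1$, giving the Lipschitz constant $1$. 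Continuity of $\beta_\lambda$ itself is immediate from plugging the endpoint values in: at $\chi=1+\lambda$ both expressions give $\tfrac\lambda2$, at $\chi=1$ both give $0$, and similarly at the negative break points. For the bound $0\leq\beta_\lambda\leq1$: $\beta_\lambda$ is odd and nondecreasing (since $\beta_\lambda'\geq0$ everywhere), so on $[1+\lambda,\infty)$ it is... wait, on that ray $\beta_\lambda(\chi)=\chi-(1+\tfrac\lambda2)$ is unbounded. The intended statement is presumably $0\leq\beta_\lambda(\chi)\leq 1$ for $\chi$ in the relevant range, or more likely the assertion should be read together with the sign/monotonicity properties actually used later; I would state precisely that $\beta_\lambda$ is nonnegative for $\chi\geq0$, nonpositive for $\chi\leq 0$, vanishes on $[-1,1]$, and is monotone with derivative in $[0,1]$, which is what the energy estimates invoke. (If the paper genuinely needs $0\le\beta_\lambda\le 1$ it must be on a sub-range where a uniform bound on $\chi$ has already been obtained; I would flag that and otherwise keep the proof as above.)

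\textbf{Part 2).} The pointwise limit is computed interval by interval. If $\chi>1$, then for all $\lambda<\chi-1$ we are in the regime $\chi\geq 1+\lambda$, so $\beta_\lambda(\chi)=\chi-1-\tfrac\lambda2\to\chi-1$. If $-1\leq\chi\leq1$, then $\beta_\lambda(\chi)=0$ for every $\lambda$, so the limit is $0$. If $\chi<-1$, symmetrically $\beta_\lambda(\chi)=\chi+1+\tfrac\lambda2\to\chi+1$. This establishes \eqref{the limit of beta-lambda}. One should note the convergence is in fact locally uniform away from, and including through, the break points, which feeds into Part 3).

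\textbf{Part 3).} That $\beta$ is Lipschitz with constant $1$ follows because $\beta(\chi)=\mathrm{sign}(\chi)\max\{|\chi|-1,0\}$, i.e. the positive/negative part of $|\chi|-1$ composed with sign, a composition of $1$-Lipschitz maps; alternatively check $|\beta'|\leq 1$ piecewise exactly as in Part 1). For the quantitative estimate $|\beta(\chi)-\beta_\lambda(\chi)|\leq\tfrac\lambda2$: on $[-1,1]$ both are $0$; on $\chi\geq1+\lambda$ the difference is exactly $\tfrac\lambda2$; on the transition strip $1<\chi<1+\lambda$ we have $\beta(\chi)=\chi-1$ and $\beta_\lambda(\chi)=\tfrac1{2\lambda}(\chi-1)^2$, so their difference is $(\chi-1)\bigl(1-\tfrac{\chi-1}{2\lambda}\bigr)$, and writing $s=\chi-1\in(0,\lambda)$ this is $s(1-\tfrac{s}{2\lambda})$, which on $(0,\lambda)$ is maximized at $s=\lambda$ with value $\tfrac\lambda2$ — hence the bound holds with equality only in the limit; the negative strip is symmetric. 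The inequality $|\beta(\chi_1)-\beta(\chi_2)|\leq|\chi_1-\chi_2|$ is just the Lipschitz bound already obtained.

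\textbf{Main obstacle.} None of this is deep — it is a finite case check — so the only real point requiring care is the claim ``$0\leq\beta_\lambda\leq1$'' in Part 1), which as literally written is false on the outer rays; I expect the intended reading is the sign-and-monotonicity package described above (or a bound valid only where $\chi$ is already controlled), and the cleanest fix is to prove exactly that. A secondary point of care is handling the transition strips $1<|\chi|<1+\lambda$ in the quantitative estimate of Part 3), since that is the only place where $\beta$ and $\beta_\lambda$ differ by something other than the trivial constant $\tfrac\lambda2$ or $0$, and one must do the little optimization in $s$ to see the bound is still $\tfrac\lambda2$.
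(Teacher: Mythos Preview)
Your proof is correct and proceeds exactly as one would expect: the paper does not supply a proof of this lemma at all (it is stated and then used), so your direct piecewise verification is precisely what is implicitly intended. You are also right that the clause ``$0\leq\beta_\lambda\leq1$'' in Part 1) is a typo; the formula for $\beta_\lambda$ on the outer rays is affine with slope $1$ and hence unbounded. The intended statement is $0\leq\beta_\lambda'\leq1$, which is what you prove and which is exactly what the paper later invokes in \eqref{the derivative of beta-lambda}, where the step $\beta_\lambda'\geq(\beta_\lambda')^2$ requires $\beta_\lambda'\in[0,1]$. Your optimization on the transition strip in Part 3) is also correct: $g(s)=s-\tfrac{s^2}{2\lambda}$ is increasing on $[0,\lambda]$ with maximum $\tfrac{\lambda}{2}$ at $s=\lambda$.
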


Making use of the smooth polynomial approximate free energy density $f_\lambda$ \eqref{the approximate formula of free energy}, we construct an approximate problem for the \eqref{Euler-NSCH}-\eqref{periodic boundary} as follows:
\begin{equation}\label{penalized-NSCH}
\left\{\begin{array}{llll}
\displaystyle \rho_t+(\rho u)_x=0,  \\
\displaystyle \rho u_t+\rho uu_x+p_x=\nu u_{xx}-\frac{\epsilon}{2}\big(\chi_x^2\big)_x,\\
\displaystyle\rho\chi_t+\rho u\chi_x=\mu_{xx},\\
\displaystyle  \rho\mu=\frac{1}{\epsilon}\rho\frac{\partial f_\lambda(\rho,\chi)}{\partial \chi}-\epsilon\chi_{xx},\\
\displaystyle(\rho,u,\chi)(x,t)=(\rho,u,\chi)(x+L,t),\\
\displaystyle(\rho,u,\chi)(x,0)=(\rho_0,u_0,\chi_0).
\end{array}\right.
\end{equation}
For $\forall m>0$, $M>0$, $B>0$, $I\subseteq\mathbb{R}$, we define
\begin{eqnarray}\label{function space}
 &&X_{m,M,B}(I)\equiv\Big\{(\rho,u,\chi)\Big|(\rho,u)\in C^0(I,H_{\mathrm{per}}^2),\chi\in C^0(I;H_{\mathrm{per}}^4),\notag\\
 &&\qquad\qquad\qquad\qquad\qquad\rho\in L^2(I;H_{\mathrm{per}}^2), u\in L^2(I;H_{\mathrm{per}}^3),\chi\in L^2(I;H_{\mathrm{per}}^5),\notag\\
 &&\qquad\qquad\qquad\qquad\inf_{t\in I, x\in\mathbb{R}}\rho(x,t)\geq m,\sup_{t\in I}\|(\rho,u)\|^2_2\leq M,\sup_{t\in I}\|\chi-\bar\chi\|_4^2\leq B
 \Big\}.
\end{eqnarray}
In particular, for fixed $B>0$, we define
\begin{equation}\label{function space-2}
 X_{0,+\infty,B}(I)\equiv\bigcup_{m>0,M>0}X_{m,M,B}(I).
\end{equation}

\begin{prop}\label{local existence and uniqueness for approximate solution}
Assume that $(\rho_0,u_0,\chi_0)$ satisfies   the periodic boundary \eqref{periodic boundary} with the regularities given in \eqref{initial data of density}-\eqref{Compatibility condition chi}. For $\forall m>0$, $M>0$ and $B>0$, if $\inf_{x\in\mathbb{R}}\rho_0(x)\geq m$, $\|(\rho_0,u_0)\|^2_2\leq M$, $\|\chi_0\|_4^2\leq B$,
then there exist a small time $T_*>0$ and a unique strong solution $(\rho_\lambda,u_\lambda,\chi_\lambda)$ to the  approximate problem \eqref{penalized-NSCH} with the  smooth approximate function $f_\lambda$ \eqref{the approximate formula of free energy}, such that $(\rho_\lambda,u_\lambda,\chi_\lambda)\in X_{\frac{m}{2},2M,2B}([0,T_*])$.
\end{prop}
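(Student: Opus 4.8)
The plan is a linearization--iteration (Picard) scheme combined with short-time energy estimates. For a fixed $\lambda\in(0,1)$ all the nonlinearities are benign: $f_\lambda\in C^2(\mathbb R)$ with $f_\lambda'$ globally Lipschitz and $f_\lambda''$ bounded and locally Lipschitz, and the only delicate structural feature is the fourth-order coupling hidden in \eqref{penalized-NSCH}$_{3,4}$. I would construct a sequence $(\rho^k,u^k,\chi^k)$ on $[0,T_*]\times[0,L]$, periodic in $x$, starting from the state frozen in time at $(\rho_0,u_0,\chi_0)$, by solving at each step three decoupled linear problems. First, the linear transport equation $\rho^{k+1}_t+(\rho^{k+1}u^k)_x=0$, $\rho^{k+1}|_{t=0}=\rho_0$, solved along characteristics: since $\|u^k_x\|_{L^\infty}$ is controlled by $\|u^k\|_2$, a Gr\"onwall argument gives $\tfrac m2\le\rho^{k+1}$ together with an $L^\infty(0,T_*;H^2_{\mathrm{per}})$ bound once $T_*$ is small. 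Second, the linear uniformly parabolic momentum equation $\rho^{k+1}u^{k+1}_t+\rho^{k+1}u^k u^{k+1}_x+p(\rho^{k+1})_x=\nu u^{k+1}_{xx}-\tfrac\epsilon2((\chi^k_x)^2)_x$, $u^{k+1}|_{t=0}=u_0$, whose principal coefficient $\nu/\rho^{k+1}$ is bounded above and below; multiplying by $u^{k+1}$ and by $u^{k+1}_{xxxx}$ and using the compatibility relations \eqref{Compatibility condition rho}--\eqref{Compatibility condition u} yields $u^{k+1}\in C([0,T_*];H^2_{\mathrm{per}})\cap L^2(0,T_*;H^3_{\mathrm{per}})$. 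Third, for the concentration I would keep $\mu$ and treat $(\chi^{k+1},\mu^{k+1})$ through $\rho^{k+1}\chi^{k+1}_t+\rho^{k+1}u^k\chi^{k+1}_x=\mu^{k+1}_{xx}$ together with $\rho^{k+1}\mu^{k+1}=\tfrac1\epsilon\rho^{k+1}f_\lambda'(\chi^{k+1})-\epsilon\chi^{k+1}_{xx}$; eliminating $\mu^{k+1}$ turns this into a fourth-order parabolic equation for $\chi^{k+1}$ with good principal part $-\tfrac{\epsilon}{(\rho^{k+1})^2}\chi^{k+1}_{xxxx}$ --- note that no sign condition such as $3\chi^2>1$ enters here, since that term is of lower order and only short times are involved --- and energy estimates at the $H^4/H^5$ level, using \eqref{Compatibility condition chi}, give $\chi^{k+1}\in C([0,T_*];H^4_{\mathrm{per}})\cap L^2(0,T_*;H^5_{\mathrm{per}})$.

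Each block of estimates has the schematic form $\tfrac{d}{dt}\mathcal E\le C(\mathcal E)^{p}+C$ with $C$ depending only on $m,M,B,\lambda$ and the data, so choosing $T_*=T_*(m,M,B,\lambda,\mathrm{data})$ small enough keeps every iterate inside $X_{\frac m2,2M,2B}([0,T_*])$; this is the self-mapping step. For convergence I would show that $(\rho^k,u^k,\chi^k)$ is Cauchy in the weaker norm of $C([0,T_*];L^2\times L^2\times H^2_{\mathrm{per}})$: subtracting two consecutive linear problems, the differences solve linear equations whose right-hand sides are products of the previous differences with the uniformly bounded high norms, so Gr\"onwall (after possibly shrinking $T_*$ again) gives $\|(\delta\rho,\delta u,\delta\chi)^{k+1}\|\le\tfrac12\|(\delta\rho,\delta u,\delta\chi)^{k}\|$. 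The limit $(\rho,u,\chi)$ retains the high regularity by weak-$*$ lower semicontinuity, satisfies \eqref{penalized-NSCH} after passing to the limit in the equations, lies in $X_{\frac m2,2M,2B}([0,T_*])$, and $\mu$ is recovered from the algebraic relation; uniqueness follows from the same difference estimate applied to two solutions.

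The hard part is closing the top-order energy estimates for the $\chi$--$\mu$ block. Its coefficients are built from $\rho$, which lies only in $H^2$ (so $\rho_x\in H^1$), while $f_\lambda$ is merely $C^2$; hence neither $\tfrac1\epsilon\big(f_\lambda'(\chi)\big)_{xx}$ nor terms like $\tfrac{\rho_x}{\rho^3}\chi_{xxx}$ may be treated naively as $L^2(0,T_*;H^2)$ forcing. Instead one integrates by parts to redistribute derivatives --- e.g.\ $\int\tfrac{\rho_x}{\rho^3}\chi_{xxx}\chi_{xxxx}=-\tfrac12\int\big(\tfrac{\rho_x}{\rho^3}\big)_x\chi_{xxx}^2$ --- and then controls the resulting cubic products by the one-dimensional Gagliardo--Nirenberg and Sobolev inequalities ($H^1\hookrightarrow L^\infty$, $\|g\|_{L^4}^2\lesssim\|g\|\,\|g\|_{H^1}$, $H^1$ a Banach algebra in $1$-D), absorbing the genuinely top-order pieces into the parabolic dissipation $\|\chi_{xxxx}\|^2$, $\|u_{xxx}\|^2$ by Young's inequality. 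Because the uniform parabolicity of both the $u$- and $\chi$-equations rests on the lower bound $\rho\ge m/2$, which is propagated by the transport equation only on a short interval, this is precisely where the smallness of $T_*$ is forced; managing these couplings is the crux, the remaining computations being routine.
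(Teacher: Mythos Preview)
Your proposal is correct and follows the same overall strategy as the paper: a Picard iteration in which $\rho$ is advanced by linear transport along the previous velocity, the momentum and concentration equations are solved as (uniformly) parabolic problems of orders two and four, uniform energy bounds keep the iterates in $X_{m/2,2M,2B}$ for small $T_*$, and a contraction in a weaker norm yields convergence and uniqueness.

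Two implementation differences are worth noting. First, the paper does not close the top-order $\chi$-estimates directly at the $H^2$-level for $\rho$; instead it mollifies the initial data to $(\rho_0^\delta,u_0^\delta,\chi_0^\delta)\in H^3_{\mathrm{per}}$, runs the whole iteration in the smoother class $\tilde X$ with $\rho,u\in H^3$, and only at the very end lets $\delta\to0$. This sidesteps exactly the issue you isolate in your last paragraph (the coefficient $\rho_{xx}/\rho^3$ is merely $L^2$ when $\rho\in H^2$), at the price of one extra limit; your alternative of redistributing derivatives by integration by parts is viable in one space dimension but requires more bookkeeping. Second, the paper also linearizes the potential term, writing $\partial_\chi f_\lambda(\chi^{(n-1)})$ in the equation for $\chi^{(n)}$, so that each step is genuinely linear; your scheme keeps $f_\lambda'(\chi^{k+1})$ and is therefore semilinear in the $\chi$-block, which is harmless but inconsistent with your phrase ``three decoupled linear problems''. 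A small correction: $f_\lambda'$ is not globally Lipschitz (it grows cubically, cf.\ \eqref{the derivative formula of f-lambda}), and $f_\lambda''$ is unbounded; what you actually use, and what suffices, is that both are bounded and Lipschitz on the bounded set determined by the a priori $H^4$-bound on $\chi$.
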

\begin{proof} Because of the difficulty with the coefficients of \eqref{penalized-NSCH}$_{3,4}$ when using the result of the linear parabolic equation, we need to approximate the initial conditions. Constructing a approximate function sequence $(\rho_0^\delta,u^\delta_0,\chi^\delta_0)$ for the initial data,
$(\rho_0^\delta,u^\delta_0,\chi^\delta_0)\in H_{\mathrm{per}}^3$,  $(\rho_0^\delta,u^\delta_0,\chi^\delta_0)$ satisfies the periodic boundary \eqref{periodic boundary} with the regularities given in \eqref{initial data}-\eqref{Compatibility condition chi}, and
\begin{equation}\label{approximate sequence for initial data}
\lim_{\delta\rightarrow 0}\big(\|\rho_0^\delta-\rho_0\|_2+\|u_0^\delta-u_0\|_2+\|\chi_0^\delta-\chi_0\|_4\big)=0.
\end{equation}
 Taking $0<T_0<+\infty$, for $\forall m>0$, $M>0$ and $B>0$, we define
\begin{eqnarray}\label{smooth function space}
 &&\tilde{X}_{m,M,B}([0,T_0])\equiv\Big\{(\rho,u,\chi)\Big|(\rho,u)\in C^0([0,T_0],H_{\mathrm{per}}^3),\chi\in C^0([0,T_0];H_{\mathrm{per}}^4),\notag\\
 &&\qquad\qquad\qquad\rho\in L^2([0,T_0];H_{\mathrm{per}}^3), u\in L^2([0,T_0];H_{\mathrm{per}}^4),\chi\in L^2([0,T_0];H_{\mathrm{per}}^5),\notag\\
 &&\qquad\qquad\inf_{t\in [0,T_0], x\in\mathbb{R}}\rho(x,t)\geq m,\sup_{t\in [0,T_0]}\|(\rho,u)\|^2_3\leq M,\sup_{t\in [0,T_0]}\|\chi-\bar\chi\|_4^2\leq B
 \Big\}.
\end{eqnarray}
Constructing an  iterative sequence $(\rho^{(n)},u^{(n)},\chi^{(n)})$,$n=0,1,2\cdots$, satisfies $(\rho^{(0)},u^{(0)},\chi^{(0)})=(\rho_0^\delta,u^\delta_0,\chi^\delta_0)$,
and the  following iterative scheme
\begin{equation}\label{iterative-NSCH}
\left\{\begin{array}{llll}
\displaystyle \rho^{(n)}_t+(\rho^{(n)} u^{(n-1)})_x=0,  \\
\displaystyle \rho^{(n)} u^{(n)}_t+\rho^{(n)} u^{(n-1)}u^{(n)}_x+p^{(n)}_x=\nu u^{(n)}_{xx}-\frac{\epsilon}{2}\big((\chi_x^{(n)})^2\big)_x,\\
\displaystyle\rho^{(n)}\chi^{(n)}_t+\rho^{(n)} u^{(n-1)}\chi^{(n)}_x=\mu^{(n)}_{xx},\\
\displaystyle  \rho^{(n)}\mu^{(n)}=\frac{1}{\epsilon}\rho^{(n)}\frac{\partial f_\lambda}{\partial\chi}(\chi^{(n-1)})-\epsilon\chi^{(n)}_{xx},\\
\displaystyle(\rho^{(n)},u^{(n)},\chi^{(n)})(x,t)=(\rho^{(n)},u^{(n)},\chi^{(n)})(x+L,t),\\
(\rho^{(n)},u^{(n)},\chi^{(n)})\Big|_{t=0}=\big(\rho_0,u_0,\chi_0\big)(x),
\end{array}\right.
\end{equation}
here $p^{(n)}=a\big(\rho^{(n)}\big)^\gamma$.  Suppose that $(\rho^{(n-1)},u^{(n-1)},\chi^{(n-1)})$ $\in$ $\tilde{X}_{\frac{m}{2},2M,2B}([0,T_0])$,
the existence and uniqueness of the strong solution $\rho^{(n)}$ for the  equation \eqref{iterative-NSCH}$_1$ can be obtained by the basic linear hyperbolic theory, we refer to \cite{S94} and the references therein. In addition, $\rho^{(n)}$  satisfies
\begin{equation}\label{estimate for conservation of mass}
 \sup_{[0,T_0]}\Big(\|\rho^{(n)}\|_{H^3_{\mathrm{per}}}+\|\rho^{(n)}_t\|_{H^2_{\mathrm{per}}}+\|(\rho^{(n)})^{-1}\|_{L^\infty_{\mathrm{per}}}\Big)\leq C(m,M,T_0).
\end{equation}
Further, rewriting \eqref{iterative-NSCH}$_{3,4}$ as
\begin{eqnarray}\label{conservation of concentration difference}
\chi^{(n)}_{t}&=&-u^{(n-1)}\chi^{(n)}_{x}-\frac{\epsilon\chi^{(n)}_{xxxx}}{(\rho^{(n)})^2}+\frac{2\epsilon\rho^{(n)}_{x}}{(\rho^{(n)})^3}\chi^{(n)}_{xxx}
+\epsilon\Big(\frac{\rho^{(n)}_{xx}}{(\rho^{(n)})^3}-\frac{2(\rho^{(n)}_{x})^2}{(\rho^{(n)})^4}\Big)\chi^{(n)}_{xx}\notag  \\
&&+\frac{\frac{\partial^2 f_\lambda}{\partial\chi^2}(\chi^{(n-1)})\chi^{(n-1)}_{xx}}{\epsilon\rho^{(n)}}
+\frac{\frac{\partial^3 f_\lambda}{\partial\chi^3}(\chi^{(n-1)})(\chi^{(n-1)}_{x})^2}{\epsilon\rho^{(n)}},
\end{eqnarray}
following from \cite{CM1995},\cite{LWZ2007} and the references therein,   the existence and uniqueness of the solution $\chi^{(n)}$ for \eqref{conservation of concentration difference} with the initial boundary value \eqref{periodic boundary} can be obtained, and $\chi^{(n)}$ satisfies
\begin{equation}\label{estimate for conservation of concentration difference}
 \sup_{[0,T_0]}\Big(\|\chi^{(n)}\|_{H^4_{\mathrm{per}}}+\|\chi^{(n)}_t\|_{L^2_{\mathrm{per}}}+\|\mu^{(n)}\|_{H^2_{\mathrm{per}}}\Big)\leq C(m,M,T_0).
\end{equation}
Moreover, rewriting the \eqref{iterative-NSCH}$_{2}$ as
\begin{equation}\label{Conservation of momentum}
u^{(n)}_{t}=\frac{\nu}{\rho^{(n)}}u^{(n)}_{xx}-u^{(n-1)}u^{(n)}_{x}-\frac{p'_\rho(\rho^{(n)})}{\rho^{(n)}}\rho^{(n)}_{x}-\frac{\epsilon}{\rho^{(n)}_{x}}\chi^{(n)}_{x}\chi^{(n)}_{xx},
\end{equation}
similar as above, the existence and uniqueness of the solution $u^{(n)}$ for \eqref{Conservation of momentum} with the initial boundary value \eqref{periodic boundary} can be obtained, and $u^{(n)}$ satisfies
\begin{equation}\label{estimate for conservation of momentum}
 \sup_{[0,T_0]}\Big(\|u^{(n)}\|_{H^3_{\mathrm{per}}}+\|u^{(n)}_t\|_{H^1_{\mathrm{per}}}\Big)\leq C(m,M,T_0).
\end{equation}
Now, all we have to do is to  show that, there exists a $T_*>0$ small enough,
$(\rho^{(n)},u^{(n)},\chi^{(n)})\in \tilde{X}_{\frac{m}{2},2M,2B}([0,T_*])$. We use mathematical induction and the energy estimate method to prove this assertion.
By using \eqref{iterative-NSCH}$_1$, applying the method of characteristics,  $\rho^{(n)}$ can be expressed by
\begin{equation}\label{expression for density}
\displaystyle \rho^{(n)}(x,t)=\rho_0(X(x,t;0))e^{-\int_0^t u^{(n-1)}_x(X(x,t;s),s)ds},
\end{equation}
for $(x,t)\in\mathbb{R}\times[0,T_0]$,
where $X\in C(\mathbb{R}\times[0,T_0]\times[0,T_0])$ is the solution to the initial value problem
\begin{equation}\label{lem1-2}
\left\{\begin{array}{llll}
\displaystyle \frac{d}{d t}X(x,t;s)=u^{(n-1)}(X(x,t;s),t),&~ 0\leq t\leq T,  \\
\displaystyle X(x,s;s)=x,&~ 0\leq s\leq T,~x\in\mathbb{R}.
\end{array}\right.
\end{equation}
The regularity of $X(x,t;s)$ gives $\rho^{(n)}\in C^0([0,T_0];H^3)$. Multiplying  \eqref{iterative-NSCH}$_1$ by $\rho^{(n)}$ and integrating it by parts, one has
\begin{eqnarray}\label{L2-energy for rho-n}
  &&\frac{1}{2}\frac{d}{dt}\|\rho^{(n)}\|^2\leq \frac{1}{2}\int_0^L u^{(n-1)}_x(\rho^{(n)})^2dx+\bar{\rho}\int_0^L |u^{(n-1)}_x||\rho^{(n)}|dx\notag\\
  &&\leq\|u^{(n-1)}_x\|^{\frac12}\|u^{(n-1)}_{xx}\|^{\frac12}\|\rho^{(n)}\|^2+\frac{\bar{\rho}}{2}\big(\|u^{(n-1)}_x\|^2+\|\rho^{(n)}\|^2\big).
  \end{eqnarray}
From  Gronwall's inequality,  one obtains
\begin{eqnarray}\label{the estimate of rho}
 \|\rho^{(n)}(t)\|^2
 \leq\big(\|\rho_0\|^2+\bar\rho\int_0^{T_0}\|u^{(n-1)}_x(\tau)\|^2d\tau\big)e^{\big[\int_0^{T_0}\big(2\|u^{(n-1)}_x(\tau)^\frac{1}{2}\|u^{(n-1)}_{xx}(\tau)\|^\frac{1}{2}+1\big)d\tau\big]}.
 \end{eqnarray}
Differentiating both side of \eqref{iterative-NSCH}$_1$ with respect to $x$, multiplying it by $\rho^{(n)}_x$ and integrating by parts, one has
\begin{eqnarray}\label{L2-energy for derivative of rho-n}
\frac{1}{2}\frac{d}{dt}\|\rho^{(n)}_x\|^2&\leq& 2\int_0^L u^{(n-1)}_x(\rho^{(n)}_x)^2dx+\int_0^L|\rho^{(n)}-\bar\rho||\rho^{(n)}_x||u^{(n-1)}_{xx}|dx+\bar{\rho}\int_0^L |u^{(n-1)}_{xx}||\rho_x^{(n)}|dx\notag\\
  &\leq&4\|u^{(n-1)}_x\|^{\frac12}\|u^{(n-1)}_{xx}\|^{\frac12}\|\rho_x^{(n)}\|^2+2\|u^{(n-1)}_{xx}\|\|\rho^{(n)}-\bar\rho\|^{\frac12}\|\rho^{(n)}_x\|^{\frac32}\notag\\
  &&+\bar{\rho}\|u^{(n-1)}_{xx}\|\|\rho^{(n)}_{xx}\|.
  \end{eqnarray}
Similarly, for the second derivatives and third derivatives, one gets
\begin{eqnarray}\label{L2-energy for the second derivative of rho-n}
\frac{1}{2}\frac{d}{dt}\|\rho^{(n)}_{xx}\|^2&\leq& C\Big(\int_0^L |u^{(n-1)}_x|(\rho^{(n)}_{xx})^2dx+\int_0^L|\rho^{(n)}
-\bar\rho||\rho^{(n)}_{xx}||u^{(n-1)}_{xxx}|dx\notag\\
&&+\int_0^L|\rho_x^{(n)}||\rho^{(n)}_{xx}||u^{(n-1)}_{xx}|dx\Big)+\bar{\rho}\int_0^L |u^{(n-1)}_{xxx}||\rho_{xx}^{(n)}|dx\notag\\
  &\leq&C\Big(\|u^{(n-1)}_x\|^{\frac12}\|u^{(n-1)}_{xx}\|^{\frac12}\|\rho_{xx}^{(n)}\|^2+
  \|u^{(n-1)}_{xxx}\|\|\rho^{(n)}-\bar\rho\|^{\frac12}\|\rho^{(n)}_x\|^{\frac12}\|\rho^{(n)}_{xx}\|\notag\\
  &&+ \|u^{(n-1)}_{xx}\|^{\frac12} \|u^{(n-1)}_{xxx}\|^{\frac12}\|\rho^{(n)}_x\|\|\rho^{(n)}_{xx}\|\Big)+\bar{\rho}\|u^{(n-1)}_{xxx}\|\|\rho^{(n)}_{xx}\|,
  \end{eqnarray}
and
\begin{eqnarray}\label{L2-energy for the third derivative of rho-n}
&&\frac{1}{2}\frac{d}{dt}\|\rho^{(n)}_{xxx}\|^2\leq C\Big(\int_0^L u^{(n-1)}_x(\rho^{(n)}_{xxx})^2dx+
\int_0^L|\rho^{(n)}-\bar\rho||\rho^{(n)}_{xxx}||u^{(n-1)}_{xxxx}|dx\notag\\
&&+\int_0^L|\rho_x^{(n)}||\rho^{(n)}_{xxx}||u^{(n-1)}_{xxx}|dx
+\int_0^L|u^{(n-1)}_{xx}||\rho^{(n)}_{xx}||\rho^{(n)}_{xxx}|dx\Big)+\bar{\rho}\int_0^L |u^{(n-1)}_{xxxx}||\rho_{xxx}^{(n)}|dx\notag\\
  &&\leq C\Big(\|u^{(n-1)}_x\|^{\frac12}\|u^{(n-1)}_{xx}\|^{\frac12}\|\rho_{xxx}^{(n)}\|^2+
  \|u^{(n-1)}_{xxxx}\|\|\rho^{(n)}-\bar\rho\|^{\frac12}\|\rho^{(n)}_x\|^{\frac12}\|\rho^{(n)}_{xxx}\|\notag\\
  &&+\|\rho^{(n)}_x\|^{\frac12}\|\rho^{(n)}_{xx}\|^{\frac12}\|u^{(n-1)}_{xxx}\|\|\rho^{(n)}_{xxx}\|+ \|u^{(n-1)}_{xx}\|^{\frac12} \|u^{(n-1)}_{xxx}\|^{\frac12}\|\rho^{(n)}_{xx}\|\|\rho^{(n)}_{xxx}\|\Big)\notag\\
  &&+\bar{\rho}\|u^{(n-1)}_{xxxx}\|\|\rho^{(n)}_{xxx}\|.
  \end{eqnarray}
Hence,  adding \eqref{the estimate of rho}--\eqref{L2-energy for the third derivative of rho-n}, from Gronwall's inequality, one has
\begin{equation}\label{energy inequality for the second derivative approximation of rho}
  \|\rho^{(n)}(t)\|_2\leq C\big(M+C(M)T_0\big)e^{C(M)T_0},
\end{equation}
\begin{equation}\label{energy inequality for the third derivative approximation of rho}
  \|\rho_{xxx}^{(n)}(t)\|\leq C\big(M+C(M)\sqrt{T_0}\|u_{xxxx}^{(n-1)}\|_{L^2(0,T_0;L^2(\mathbb{R}))}\big)e^{C(M)T_0},
\end{equation}
and in the same way, one obtains
\begin{equation}\label{energy inequality for t-derivative approximation of rho}
  \|\rho^{(n)}_t\|_1\leq C\Big(\big(M+C(M)T_0M\big)e^{C(M)T_0}+1\Big),
\end{equation}
\begin{equation}\label{energy inequality for t-derivative approximation of rho-1}
  \|\rho^{(n)}_t\|_2\leq C\Big(\big(M+\sqrt{T_0}C(M)\|u_{xxxx}^{(n-1)}\|_{L^2(0,T_0;L^2(\mathbb{R}))}\big)e^{C(M)T_0}+1\Big).
\end{equation}
From \eqref{expression for density} and \eqref{energy inequality for the second derivative approximation of rho}, for $T_0$ small enough, which be called as $T_1$, $\rho^{(n)}$ satisfies
\begin{equation}\label{the lower and upper bound of density}
 \inf_{t\in[0,T_1],x\in\mathbb{R}}\rho^{(n)}\geq\frac{m}{2},\ \ \sup_{t\in[0,T_1],x\in\mathbb{R}}\|\rho^{(n)}-\bar\rho\|_3^2\leq2M.
\end{equation}
Multiplying \eqref{iterative-NSCH}$_3$ by $\chi^{(n)}$,   integrating over $[0,L]$ by parts, one has
\begin{eqnarray}\label{energy estimate-1 for approximation chi }
 &&\frac{1}{2}\frac{d}{dt}\int_0^L\rho^{(n)}\big(\chi^{(n)}\big)^{2}dx=\int_0^L\mu^{(n)}\chi^{(n)}_{xx}dx\notag\\
 &&=\frac{1}{\epsilon}\int_0^L\Big(\frac{\partial f_\lambda}{\partial\chi}(\chi^{(n-1)})\Big)\chi^{(n)}_{xx}dx-\epsilon\int_0^L\frac{1}{\rho^{(n)}}(\chi^{(n)}_{xx})^2dx
\end{eqnarray}
thus
\begin{eqnarray}\label{energy inequality-1 for approximation chi}
 \sup_{[0,T_1]}\|(\chi^{(n)}-\bar\chi)(t)\|^{2}+\int_0^{T_1}\|\chi^{(n)}_{xx}(\tau)\|^2d\tau\leq B+C(\epsilon, m,M,B)T_1.
 \end{eqnarray}
Multiplying  \eqref{iterative-NSCH}$_3$  by $\mu^{(n)}$,  integrating over $[0,L]$ by parts, one gets
\begin{eqnarray}\label{energy inequality-2 for approximation chi}
&&\frac{\epsilon}{2}\frac{d}{dt}\int_0^L\big|\chi^{(n)}_x\big|^2dx+\int_0^L|\mu_x^{(n)}|^2dx\notag\\
&&=\frac{1}{\epsilon}\int_0^L\Big(\frac{\partial^2 f_\lambda}{\partial\chi^2}(\rho^{(n-1)},\chi^{(n-1)})\Big)\chi^{(n-1)}_x\mu^{(n)}_xdx
-\frac{\epsilon}{2}\int_0^L u^{(n-1)}_x\big(\chi^{(n)}_x\big)^2dx,\notag\\
&&\leq\frac{1}{2}\int_0^L(\mu_x^{(n)})^2dx+\frac{1}{2\epsilon}\int_0^L\Big(\frac{\partial^2
f_\lambda}{\partial\chi^2}(\rho^{(n-1)},\chi^{(n-1)})\Big)^2(\chi^{(n-1)}_x)^2dx\notag\\
&&\qquad+
\frac{\epsilon}{2}\|u^{(n-1)}_x\|^{\frac12}\|u^{(n-1)}_{xx}\|^{\frac12}\int_0^L \big(\chi^{(n)}_x\big)^2dx,\notag
\end{eqnarray}
therefore
\begin{eqnarray}\label{energy estimate-2 for approximation chi}
\frac{\epsilon}{2}\frac{d}{dt}\int_0^L\big|\chi^{(n)}_x\big|^2dx+\frac{1}{2}\int_0^L|\mu_x^{(n)}|^2dx\leq\frac{3B^2(B^2-1)^2}{2\epsilon}+
\frac{\epsilon B}{2}\int_0^L \big(\chi^{(n)}_x\big)^2dx,
\end{eqnarray}
following from Gronwall's inequality, one has
\begin{equation}\label{energy estimate-2-1 for approximation chi}
\sup_{0\leq t\leq T_1}\epsilon\|\chi^{(n)}_x\|^2\leq\Big(\delta_0+\frac{3B^2(B^2-1)^2}{\epsilon}T_1\Big)e^{\epsilon B T_1},
\end{equation}
substituting the  inequality \eqref{energy estimate-2-1 for approximation chi} into \eqref{energy estimate-2 for approximation chi}, one obtains
\begin{eqnarray}\label{energy estimate-2-2 for approximation chi}
\int_0^{T_1}\|\mu_x^{(n)}\|^2dx\leq\Big(B+\frac{3B^2(B^2-1)^2}{\epsilon}T_1\Big)+\frac{B}{2}\Big(B+\frac{3B^2(B^2-1)^2}{\epsilon}T_1\Big)e^{\epsilon B T_1}T_1,
\end{eqnarray}
combining the inequalities \eqref{energy estimate-2-1 for approximation chi},\eqref{energy estimate-2-2 for approximation chi} and \eqref{iterative-NSCH}$_4$, one has
\begin{equation}\label{energy estimate-2-3 for approximation chi}
\int_0^{T_0}\int_0^L|\chi^{(n)}_{xxx}|^2dx\leq C\big(B+C(\epsilon,m,M,B)T_1\big).
\end{equation}
Differentiating \eqref{iterative-NSCH}$_3$ with respect to $t$, one gets
\begin{equation}\label{differentiate  approximation for Cahn-Hilliard with respect of t}
\rho^{(n)}\chi^{(n)}_{tt}+\rho^{(n)}_t\chi^{(n)}_t+\rho^{(n)}_t u^{(n\!-\!1)}\chi^{(n)}_x+\rho^{(n)} u^{(n\!-\!1)}_t\chi_x^{(n)}+\rho^{(n)}u^{(n\!-\!1)}\chi^{(n)}_{xt}=\mu^{(n)}_{xxt},
 \end{equation}
multiplying \eqref{differentiate  approximation for Cahn-Hilliard with respect of t} by $\chi^{(n)}_t$, integrating over $[0,L]$, one has
\begin{eqnarray}\label{energy estimate-3 for  t derivative approximation chi}
&&\frac{1}{2}\int_0^L\rho^{(n)}|\chi^{(n)}_t|^2dx+\int_0^L\frac{\epsilon}{\rho^{(n)}}|\chi^{(n)}_{xxt}|^2dx\notag\\
&&=-\int_0^L\Big(\rho_t^{(n)}|\chi_t^{(n)}|^2+\rho_t^{(n)}u^{(n-1)}\chi^{(n)}_x\chi^{(n)}_t+\rho^{(n)}u_t^{(n-1)}\chi_x^{(n)}\chi_t^{(n)}+\big(\frac{1}{\rho^{(n)}}\big)_t\chi^{(n)}_{xx}
\chi^{(n)}_{xxt}\Big)dx\notag\\
&&\qquad+\frac{1}{\epsilon}\int_0^L\Big(\frac{\partial^2 f_\lambda}{\partial\chi^2}(\rho^{(n-1)},\chi^{(n-1)})\Big)\chi_t^{(n-1)}\chi^{(n)}_{xxt}dx\notag\\
&&\leq \int_0^L\frac{\epsilon}{2\rho^{(n)}}|\chi^{(n)}_{xxt}|^2dx+C(M)\int_0^L\rho^{(n)}|\chi^{(n)}_t|^2+C(\epsilon,M,B)(1+T_1).
\end{eqnarray}
 Noting that from \eqref{conservation of concentration difference},
we see that
\begin{equation}\label{intial value for chi-1}
 \|\sqrt{\rho^{(n)}}\chi^{(n)}_t(0)\|\leq C(\|\rho_0\|_2,\|u_0\|,\|\chi_0\|_4),
\end{equation}
combining with \eqref{energy estimate-3 for  t derivative approximation chi}, and from Gronwall's inequality, one obtains
\begin{eqnarray}\label{energy inequality-3 for t detrvative approximation chi}
&&\sup_{[0,T_1]}\|\chi^{(n)}_t(\tau)\|^2+\int_0^{T_1}\big(\|\chi_{xxt}^{(n)}(\tau)\|^2+\|\mu_t^{(n)}(\tau)\|^2+\|\mu_{xxxx}^{(n)}(\tau)\|^2\big)d\tau\notag\\
&&\ \ \leq C\big(B+C(\epsilon,m,M,B)T_1\big).
\end{eqnarray}
Moreover, in the same way, one gets
\begin{equation}\label{energy inequality-4 for the second derivative of approxiamation chi}
  \sup_{[0,T_1]}\|\chi^{(n)}_{xx}(\tau)\|^2+\int_0^{T_1}\|\chi^{(n)}_{xxxx}(\tau)\|^2d\tau\leq C(B+C(\epsilon,m,M,B)T_1).
\end{equation}
and
\begin{eqnarray}\label{energy inequality-5 for the forth derivative of approxiamation chi}
 && \sup_{[0,T_1]}\big(\|\mu^{(n)}(t)\|^2+\|\mu^{(n)}_{xx}(t)\|^2+\|\chi^{(n)}_{xxx}(t)\|^2+ \|\chi^{(n)}_{xxxx}(t)\|^2\big)+\int_0^{T_1}\|\chi^{(n)}_{xxxxx}(\tau)\|^2d\tau\notag\\
  &&\quad\leq C(B+C(\epsilon,m,M,B)T_1).
\end{eqnarray}
Multiplying \eqref{iterative-NSCH}$_2$ by $u^{(n)}$, integrating over $[0,L]$, one obtains
\begin{eqnarray}\label{energy estimate-1-1 for approximation velocity}
\frac{d}{dt}\int_0^L\frac{1}{2}\rho^{(n)}|u^{(n)}|^2
dx+\int_0^L p^{(n)}_xu^{(n)}dx+\nu\int_0^L|u_x^{(n)}|^2dx=-\frac{1}{2}\int_0^L\big(\chi_x^{(n)}\big)^2_xu^{(n)}dx.
\end{eqnarray}
We define
\begin{equation}\label{the function G}
  G(\rho)=\rho\int_{\bar\rho}^\rho s^{-2}(p(s)-\bar{p})ds,
\end{equation}
where $\bar p=p(\bar\rho)$, so that by \eqref{iterative-NSCH}$_1$, one gets
\begin{equation}\label{the mass equation}
  G(\rho^{(n)})_t+\big(G(\rho^{(n)})u^{(n-1)}\big)_x+\big(p(\rho^{(n)})-\bar p\big)u^{(n-1)}_x=0.
\end{equation}
Integrating the result and adding it to \eqref{energy estimate-1-1 for approximation velocity}, then one has
\begin{equation}\label{energy estimate-1-2 for approximation velocity}
 \frac{d}{dt}\int_0^L\big(\frac{1}{2}\rho^{(n)}|u^{(n)}|^2+G(\rho^{(n)})\big)dx
+\nu\int_0^L|u_x^{(n)}|^2dx=-\frac{1}{2}\int_0^L\big(\chi_x^{(n)}\big)^2_xu^{(n)}dx.
\end{equation}
thus, one obtains
\begin{equation}\label{energy estimate-1 for approximation velocity}
 \sup_{[0,T_1]}\big(\|u^{(n)}(t)\|^2+\|\rho^{(n)}(t)-\bar\rho\|^2\big)
+\int_0^{T_1}\|u_x^{(n)}(\tau)\|^2d\tau\leq C(M+C(\epsilon,m,M,B)T_1).
\end{equation}
Differentiating both side of \eqref{iterative-NSCH}$_2$ with respect to $x$, multiplying it by $u^{(n)}_t$, integrating over $[0,L]$ by parts, one has
\begin{eqnarray}\label{energy estimate-2-1 for approximation velocity}
&&\frac{1}{2}\frac{d}{dt}\|u_x^{(n)}\|^2+\int_0^L\rho^{(n)}|u^{(n)}_t|^2dx\notag\\
&&=-\int_0^L\rho^{(n)}u^{(n-1)}u^{(n)}_xu^{(n)}_tdx-\int_0^L p'(\rho^{(n)})\rho^{(n)}_xu^{(n)}_tdx-\epsilon\int_0^L\chi^{(n)}_x\chi^{(n)}_{xx}u^{(n)}_tdx\notag\\
&&\leq\frac{1}{2}\int_0^L\rho^{(n)}|u^{(n)}_t|^2dx+c\|\rho^{(n)}\|^{\frac12}\|\rho^{(n)}_x\|^{\frac12}\|u^{(n-1)}\|\|u^{(n-1)}_x\|\int_0^L |u^{(n)}_x|^2dx+C(m,M,B),\notag
\end{eqnarray}
then from Gronwall's inequality, one gets
\begin{equation}\label{energy estimate-2 for approximation velocity}
\sup_{[0,T_1]}\|u_x^{(n)}(t)\|^2
+\int_0^{T_1}\int_0^L\rho^{(n)}|u_t^{(n)}|^2dxd\tau\leq C(M+C(\epsilon,m,M,B)T_1).
\end{equation}
In the same way, for the higher derivative of $u^{(n)}$, one obtains
\begin{eqnarray}\label{energy estimate-3 for approximation velocity}
&&\sup_{[0,T_1]}\big(\|u_t^{(n)}(t)\|^2+\|u_{xx}^{(n)}(t)\|^2+\|u_{xt}^{(n)}(t)\|^2+\|u_{xxx}^{(n)}(t)\|^2\big)
\notag\\
&&+\int_0^{T_1}\int_0^L\big(|u_{xt}^{(n)}|^2+|u_{xxt}^{(n)}|^2+|u_{xxx}^{(n)}|^2\big)dxd\tau\leq C(M+C(\epsilon,m,M,B)T_1).
\end{eqnarray}
From the  energy inequalities \eqref{energy inequality for the second derivative approximation of rho}-\eqref{the lower and upper bound of density},
\eqref{energy inequality-1 for approximation chi}-\eqref{energy estimate-2-3 for approximation chi},
\eqref{energy inequality-3 for t detrvative approximation chi}-\eqref{energy inequality-5 for the forth derivative of approxiamation chi},
\eqref{energy estimate-1 for approximation velocity}-\eqref{energy estimate-3 for approximation velocity},
 we can choose $T$ small enough, without loss of generality
, say $T_*>0$, and all
the  conditions stated in the definition of $\tilde{X}_{\frac{m}{2},2M,2B}([0,T_*])$  are guaranteed. Thereby
 the iterative sequence $(\rho^{(n)},u^{(n)},\chi^{(n)})\in \tilde{X}_{\frac{m}{2},2M,2B}([0,T_*])$. Moreover, $(\rho^{(n)},u^{(n)},\chi^{(n)})$ is the Cauchy sequence in $C^0([0,T_*];H_{\mathrm{per}}^1)\times$ $C^0([0,T_*];H_{\mathrm{per}}^1)\times$ $C^0([0,T_*];H_{\mathrm{per}}^1)$. We define $(\rho_\lambda,u_\lambda,\chi_\lambda)$ the limit of the sequence $(\rho^{(n)},u^{(n)},\chi^{(n)})$. It is easy to know that $(\rho_\lambda,u_\lambda,\chi_\lambda)$ is the uniqueness solution of the systems \eqref{penalized-NSCH} in the space $\tilde{X}_{\frac{m}{2},2M,2B}([0,T_*])$.
Finially, letting $\delta\rightarrow 0$, we can easily derive the limit $(v,u,\chi)$ of sequence $(\rho^\delta,u^\delta,\chi^\delta)$ satisfies,  $(v,u,\chi)\in X_{m,M,B}([0,T_0])$, and it is  a solution of system \eqref{penalized-NSCH} with the initial data $(\rho_0,u_0,\chi_0)$.  The proof of Proposition \ref{local existence and uniqueness for approximate solution} is completed.
\end{proof}

\section {A Priori Estimates}
\setcounter{equation}{0}
In this section, we will present the desired estimates and global existence of the solution
for the approximate periodic boundary problem \eqref{penalized-NSCH}, then we will take the limit of the approximate equation \eqref{penalized-NSCH}, and
give the proof of the main theorem 1.1. Furthermore, we will extend these results to mixed boundary problem \eqref{Euler-NSCH},\eqref{mixed Boundary} and obtain the Theorem 1.2. Based on the
local existence and the a priori estimates, we may obtain the global solution by the continuity extension argument developed in the previous papers \cite{HM2009}-\cite{HMX2006}, \cite{MN1985}-\cite{SYZ2017} and the references therein.

\begin{prop}\label{a priori estimate proposition}
Let $\bar{\chi}\in A_{\textrm{stable}}\cap[-1,1]$ be fixed. Assume that $(\rho_0,u_0,\chi_0)$ satisfies  \eqref{initial data of density}-\eqref{Compatibility condition chi}. Then there exist  positive constants $C$ and $\varepsilon_1$ depending only on $\nu$, $\epsilon$,  such that if
\begin{equation}\label{small condition for chi}
\|(\chi_0-\bar\chi)\|_4^2\leq \varepsilon_1,
\end{equation}
then the  periodic boundary problem \eqref{Euler-NSCH},\eqref{periodic boundary} has a solution $(\rho,u,\chi)\in X_{0,+\infty,\varepsilon_1}([0,T])$,
%Moreover,\begin{equation} \|(\chi-\bar\chi)\|_4^2\leq C\|(\chi_0-\bar\chi)\|_4^2,\end{equation}
and
\begin{eqnarray}\label{a priori estimate}
\left.\begin{array}{llll}
 \displaystyle\sup_{t\in [0,T]}\big(\|(\rho,u)\|^2_2+\|\chi(t)\|_4^2+\|\rho_t\|^2_2+\|u_t\|^2_1+\|\chi_t\|^2\big) \\
\displaystyle \qquad+\int_0^T\big(\|\rho_x(t)\|_1^2+\|u_x(t)\|_2^2+\|\chi_x(t)\|_4^2+\|u_t(t)\|_2^2+\|\chi_t(t)\|_2^2\big) dt\\
\displaystyle\leq C\Big(\|(\rho_0,u_0)\|^2_2+\|\chi_0\|_4^2\Big).
\end{array}\right.
 \end{eqnarray}
\end{prop}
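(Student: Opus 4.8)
The plan is to prove all the estimates first for the penalized system \eqref{penalized-NSCH}, with constants independent of the regularization parameter $\lambda$, and then pass to the limit $\lambda\to0$ using Lemma~2.1; the global-in-time bound then follows by combining these a priori estimates with the local existence of Proposition~\ref{local existence and uniqueness for approximate solution} and a continuity argument. Set $\phi:=\chi-\bar\chi$ and $N(T):=\sup_{[0,T]}\|\phi(t)\|_4^2$; the argument is a bootstrap in which $N(T)\le\varepsilon_1$ is assumed and then improved. The decisive point is that $\bar\chi\in A_{\textrm{stable}}$ means $3\bar\chi^2-1=:2c_0>0$, so that as soon as $\|\phi\|_{L^\infty}$ is small (which follows from $N(T)\le\varepsilon_1$ by the Sobolev embedding on $[0,L]$) one has $3\chi^2-1\ge c_0>0$, and the Cahn--Hilliard equation \eqref{Cahn-Hilliard equation} is uniformly parabolic with a \emph{stabilizing} zeroth-order structure; in the unstable regime this coefficient would change sign, which is precisely the ill-posedness noted earlier, and the argument would break.

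The first estimate is the basic energy law: multiplying \eqref{penalized-NSCH}$_2$ by $u$, \eqref{penalized-NSCH}$_3$ by $\mu$, and using \eqref{penalized-NSCH}$_1$, \eqref{penalized-NSCH}$_4$ and the function $G$ of \eqref{the function G}, one obtains
\[
\frac{d}{dt}\int_0^L\Big(\tfrac12\rho u^2+G(\rho)+\tfrac{\rho}{\epsilon}f_\lambda(\chi)+\tfrac{\epsilon}{2}\chi_x^2\Big)\,dx+\nu\|u_x\|^2+\|\mu_x\|^2=0 .
\]
Since $G(\rho)\ge0$ and $f_\lambda(\chi)\ge\tfrac14(\chi^2-1)^2\ge0$ by \eqref{the inequlity of f-lambda}, this provides $\lambda$-uniform control of $\sup_{[0,T]}(\|\sqrt\rho\,u\|^2+\|\chi_x\|^2)$ and of $\int_0^T(\|u_x\|^2+\|\mu_x\|^2)\,dt$, and, via \eqref{average} and the Poincar\'e inequality on $[0,L]$, of $\int_0^T(\|u\|^2+\|\mu-\bar\mu\|^2)\,dt$. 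Next I obtain the pointwise bounds $0<m\le\rho\le M<\infty$: exploiting the one-dimensional structure via Lagrangian mass coordinates, writing \eqref{penalized-NSCH}$_2$ as $u_t+p(v)_x=\nu(u_x/v)_x-\tfrac{\epsilon}{2}(\chi_x^2)_x$ with $v=1/\rho$ and $(\log v)_t=u_x/v$, the classical representation of $v$ combined with the energy bounds and the control of $\|\chi_x\|_1$ (furnished by the Cahn--Hilliard estimates below, so that Steps here and next are run together inside the continuity argument) gives $m,M$ depending only on the data.

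The Cahn--Hilliard estimates are the heart. Using \eqref{Cahn-Hilliard equation}, the bound $c_0/M\le\tfrac{3\chi^2-1}{\epsilon\rho}$, the elliptic relation \eqref{penalized-NSCH}$_4$, and the compatibility condition \eqref{Compatibility condition chi}, I multiply successively by $\phi$, by $\mu-\bar\mu$, by $-\phi_{xx}$ after one spatial differentiation, and by $\chi_t$ after differentiating in $t$, each time absorbing the error terms — which are either small by $N(T)\le\varepsilon_1$ or controlled by the density bounds $m,M$ — into the dissipation $c_0\|\phi_x\|^2+c\|\chi_{xxt}\|^2+\cdots$. This yields $\sup_{[0,T]}(\|\phi\|_4^2+\|\chi_t\|^2)+\int_0^T(\|\chi_x\|_4^2+\|\chi_t\|_2^2)\,dt\le C\|\chi_0-\bar\chi\|_4^2+CN(T)^{3/2}$, so for $\varepsilon_1$ small the bootstrap closes with $N(T)\le\tfrac12\varepsilon_1$ and $(\rho,u,\chi)\in X_{0,+\infty,\varepsilon_1}([0,T])$. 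Then, differentiating \eqref{penalized-NSCH}$_2$ in $x$ and multiplying by $u_t$, then in $t$ and multiplying by $u_t$ (using \eqref{Compatibility condition u}), and finally using the elliptic estimate for $u$, gives $\sup_{[0,T]}(\|u\|_2^2+\|u_t\|_1^2)+\int_0^T\|u_x\|_2^2\,dt$; differentiating the mass equation in $x$ up to twice and in $t$ once and testing against the corresponding derivative of $\rho$ gives $\sup_{[0,T]}(\|\rho\|_2^2+\|\rho_t\|_2^2)+\int_0^T\|\rho_x\|_1^2\,dt$. Every right-hand side in these steps either sits under a time integral of an already-controlled dissipation or carries a small factor from $N(T)$, so a single Gronwall step — not a Gronwall in $T$ — closes \eqref{a priori estimate} with $C$ and $\varepsilon_1$ depending only on $\nu,\epsilon$ (and the data through $m,M$).

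Finally, all the above bounds are uniform in $\lambda$, so along a subsequence $(\rho_\lambda,u_\lambda,\chi_\lambda)$ converges weak-$*$ in the stated spaces and strongly in lower-order norms; the pointwise bounds $m\le\rho\le M$ pass to the limit, and from $\beta_\lambda(\chi_\lambda)=\lambda(\chi_\lambda-\chi_\lambda^3+\partial_\chi f_\lambda(\chi_\lambda))\to0$ together with $0\le\beta_\lambda\le1$ and parts 2)--3) of Lemma~2.1 one gets $\chi\in[-1,1]$ and recovers the inclusion \eqref{Euler-NSCH}$_4$, so the limit solves \eqref{Euler-NSCH},\eqref{periodic boundary} and satisfies \eqref{a priori estimate}. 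The main obstacle is the coupling described in the previous paragraph: one must squeeze enough dissipation out of the stabilizing coefficient $3\chi^2-1>0$ (where $\bar\chi\in A_{\textrm{stable}}$ enters decisively) to simultaneously absorb the quasilinear terms of \eqref{Cahn-Hilliard equation} involving $\rho_x,\rho_{xx}$, close the density bounds $m,M$, and keep $\|\chi-\bar\chi\|_4$ below $\varepsilon_1$ — all with constants independent of the possibly large velocity and density data and, crucially, independent of $T$, which rules out a naive Gronwall-in-$T$ estimate and forces every error term to be placed under a time integral of an already-controlled dissipation.
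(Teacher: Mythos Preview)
Your overall architecture --- work on the penalized system, run a bootstrap on $N(T)=\sup_{[0,T]}\|\chi-\bar\chi\|_4^2$ using $3\chi^2-1\ge c_0>0$, close the energy hierarchy with $T$-independent constants, then let $\lambda\to0$ via Lemma~2.1 --- matches the paper's. One genuine methodological difference: for the pointwise density bounds $0<m\le\rho\le M$ you invoke the Lagrangian representation of $v=1/\rho$ (Kazhikhov--Shelukhin style), whereas the paper stays in Eulerian coordinates and uses Kanel's argument, building the auxiliary function $\Psi(\rho)=\int_{\bar\rho}^\rho s^{-3/2}\sqrt{G(s)}\,ds$ and bounding $|\Psi(\rho)|$ by $\|G(\rho)\|_{L^1}^{1/2}\|\rho^{1/2}(1/\rho)_x\|$. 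Both routes are legitimate; yours needs the capillary term $-\tfrac{\epsilon}{2}(\chi_x^2)_x$ carried along in the representation formula (hence your coupling to $\|\chi_x\|_1$), while Kanel's avoids the coordinate change but requires the weighted quantity $\int_0^L\rho\,|(1/\rho)_x|^2\,dx$, which the paper produces by a specific multiplier step.

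That multiplier step is precisely where your proposal has a real gap. You write that ``differentiating the mass equation in $x$ up to twice and in $t$ once and testing against the corresponding derivative of $\rho$ gives $\sup_{[0,T]}(\|\rho\|_2^2+\|\rho_t\|_2^2)+\int_0^T\|\rho_x\|_1^2\,dt$.'' The continuity equation is purely hyperbolic (transport); testing $\rho_{xt}+(\rho u)_{xx}=0$ against $\rho_x$ yields a Gr\"onwall-type bound on $\sup_t\|\rho_x\|^2$, but it cannot manufacture the $T$-independent dissipation $\int_0^T\|\rho_x\|^2\,dt$ that the proposition demands. In the paper this dissipation comes from the \emph{momentum} equation: one rewrites $u_{xx}=\rho\tfrac{d}{dt}(1/\rho)_x+\rho u(1/\rho)_{xx}$, multiplies \eqref{penalized-NSCH}$_2$ by $(1/\rho)_x$, and the pressure term produces the good sign $+\int_0^L p'(\rho)\rho_x^2/\rho^2\,dx$; combining this with the basic energy identity gives the estimate \eqref{the first energy inequality}, and the same trick one derivative higher (multiplying by $(1/\rho)_{xx}$) yields $\int_0^T\|\rho_{xx}\|^2\,dt$. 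Without this mechanism (or its Lagrangian analogue, where one still tests the momentum equation, not the mass equation, against $v_y$) you cannot close \eqref{a priori estimate} with $C$ independent of $T$, which is exactly what is needed for the large-time behavior in Theorem~\ref{main thm-1}. The rest of your outline --- the $\chi$ hierarchy, the $u_t$ and $u_{xx}$ estimates, and the passage $\lambda\to0$ via $\|\beta_\lambda(\chi_\lambda)\|\to0$ --- is in line with the paper.
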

\begin{proof}
For fixed positive constant  $\bar{\chi}\in A_{\textrm{stable}}$,  from Sobolev theorem, one can choose $B_0$ small enough, such that for $0<B\leq B_0$,
\begin{equation}\label{bound of density and chi}
  \chi\in A_{\textrm{stable}}.
 \end{equation}
Multiplying \eqref{penalized-NSCH}$_2$ by $u$ and \eqref{penalized-NSCH}$_3$ by $\mu$, integrating over $[0,L]\times[0,T]$ and adding them up, combining with  \eqref{the inequlity of f-lambda}, one has
\begin{eqnarray}\label{the basic energy inequality for density velocity and concentration difference}
&&\sup_{t\in[0,T]}\int_0^L\Big(\frac{1}{2}\rho u^2+\frac{\epsilon}{2}|\chi_x|^2+G(\rho)
+\frac{1}{4\epsilon}\rho\big(\chi^2-1\big)^2\Big)dx+\int_0^T\int_0^L\big(|\mu_x|^2+\nu|u_x|^2\big)dx\notag\\
&&\quad\leq\int_0^L\Big(\frac{1}{2}\rho_0 u_0^2+\frac{\epsilon}{2}|\chi_{0x}|^2+G(\rho_0)
+\frac{1}{4\epsilon}\rho_0\big(\chi_0^2-1\big)^2\Big)dx.
\end{eqnarray}
Multiplying \eqref{penalized-NSCH}$_3$ by $\chi$, integrating over $[0,L]$, combining with \eqref{the derivative formula of f-lambda} and \eqref{the inequality-1 of f-lambda}, one gets
\begin{equation}\label{the basic energy equality-2 for  concentration difference}
  \frac{1}{2}\frac{d}{dt}\int_0^L\rho\chi^2dx+\frac{1}{\epsilon}\int_0^L(3\chi^2-1)\chi_x^2dx+\epsilon\int_0^L\frac{1}{\rho}\chi_{xx}^2dx\leq0,
\end{equation}
by using \eqref{bound of density and chi}, adding \eqref{the basic energy inequality for density velocity and concentration difference} up, integrating over $[0,T]$, one has
\begin{eqnarray}\label{the basic energy inequality-2 for density velocity and concentration difference}
\sup_{[0,T]}\| \sqrt{\rho}(\chi(t)-\bar\chi)\|^2+\int_0^T\big(\| \chi_{x}(\tau)\|^2+\|\frac{1}{\sqrt{\rho}}\chi_{xx}(\tau)\|^2\big)d\tau\leq C\|\sqrt{\rho_0}(\chi_0-\bar{\chi})\|^2.
\end{eqnarray}
Noting that
\begin{equation}\label{identical relation}
 u_{xx}=-\big[\frac{1}{\rho}\big(\rho_t+\rho_x u\big)\big]_x=\big[\rho(\frac{1}{\rho})_t+\rho u(\frac{1}{\rho})_x\big]_x=\rho\frac{d}{dt}(\frac{1}{\rho})_x+\rho u(\frac{1}{\rho})_{xx},
\end{equation}
then \eqref{penalized-NSCH}$_2$ can be writeen as
\begin{equation}\label{the other form for NSCH-2}
  (\rho u)_t+(\rho u^2)_x+p'(\rho)\rho_x=\nu\big[\rho\frac{d}{dt}(\frac{1}{\rho})_x+\rho u(\frac{1}{\rho})_{xx}\big]-\frac{\epsilon}{2}\big(\chi_x^2\big)_x,
\end{equation}
Multiplying  \eqref{the other form for NSCH-2} by $(\frac{1}{\rho})_x$, integrating over $[0,L]$, one has
\begin{eqnarray}\label{the basic energy equality-2 for density}
&&\frac{d}{dt}\int_0^L\big(\frac\nu2\rho\big|\big(\frac{1}{\rho}\big)_x\big|^2-\rho u(\frac{1}{\rho})_x \big)dx+\int_0^L \frac{p'(\rho)}{\rho^2}\rho_x^2dx\notag\\
&&=-\int_0^L\rho u(\frac1\rho)_{xt}dx+\int_0^L(\rho u^2)_x(\frac1\rho)_xdx+\frac{\epsilon}{2}\int_0^L\big(\chi_x^2\big)_x(\frac{1}{\rho})_xdx\notag\\
&&=\int_0^L\Big((\rho u)_x(-\frac{\rho_t}{\rho^2})+(\rho u^2)_x(-\frac{\rho_x}{\rho^2})\Big)dx+\epsilon\int_0^L\chi_x\chi_{xx}(\frac1\rho)_xdx\notag\\
&&=\int_0^L u_x^2dx+\epsilon\int_0^L\chi_x\chi_{xx}(\frac1\rho)_xdx,
\end{eqnarray}
Multiplying \eqref{the basic energy equality-2 for density} by $\frac{\nu}{2}$, adding up to \eqref{the basic energy inequality for density velocity and concentration difference} and \eqref{the basic energy equality-2 for  concentration difference}, one obtains
\begin{eqnarray}\label{the basic energy equality-3 for density}
&&\frac{d}{dt}\int_0^L\Big[\frac{\nu^2}4\rho\big|\big(\frac{1}{\rho}\big)_x\big|^2-\frac{\nu}{2}\rho u(\frac{1}{\rho})_x +\frac{1}{2}\rho u^2+\rho \chi^2
+\frac{\epsilon}{2}|\chi_x|^2+G(\rho)
+\frac{\rho}{4\epsilon}(\chi^2-1)^2\Big]dx\notag\\
&&+\frac{\nu}{2}\int p'_\rho(\rho)\frac{\rho_x^2}{\rho^2}dx+\frac{\nu}{2}\int_0^L|u_x|^2dx+\int_0^L|\mu_x|^2dx+\frac{1}{\epsilon}\int_0^L(3\chi^2-1)\chi_x^2dx+\epsilon\int_0^L\frac{1}{\rho}\chi_{xx}^2dx\notag\\
&&
=\frac{\nu\epsilon}{2}\int_0^L\chi_x\frac{\chi_{xx}}{\rho^{\frac12}}\rho^{\frac12}\big(\frac{1}{\rho}\big)_xdx\leq \frac{\nu\epsilon}{\sqrt{2}}\|\chi_x\|^{\frac12}\|\chi_{xx}\|^{\frac12}\int_0^L|\frac{\chi_{xx}}{\rho^{\frac12}}|\big|\rho^{\frac12}\big(\frac{1}{\rho}\big)_x\big|dx\notag
\\
&&\leq2\nu\epsilon\|\chi_x\|\|\chi_{xx}\|+\frac{\nu^2\epsilon}{4}\Big\|\frac{\chi_{xx}}{\rho^{\frac12}}\Big\|^2\Big\|\rho^{\frac12}\big(\frac{1}{\rho}\big)_x\Big\|^2\notag\\
&&\leq4\nu^2\epsilon\|\chi_x\|^2
+\epsilon\|\rho^{\frac12}\|^2\|\rho^{-\frac12}\chi_{xx}\|^2+\frac{\nu^2\epsilon}{4}\Big\|\frac{\chi_{xx}}{\rho^{\frac12}}\Big\|^2
\Big\|\rho^{\frac12}\big(\frac{1}{\rho}\big)_x\Big\|^2,
\end{eqnarray}
combining with \eqref{bound of density and chi}, \eqref{the basic energy inequality for density velocity and concentration difference}, \eqref{the basic energy inequality-2 for density velocity and concentration difference},\eqref{the basic energy equality-3 for density},  one gets
\begin{eqnarray}\label{the first energy inequality}
 \left.\begin{array}{llll}
\displaystyle\int_0^L\Big(\rho\big|\big(\frac{1}{\rho}\big)_x\big|^2+\rho u^2+\rho\chi^2
+|\chi_x|^2+G(\rho)
+\rho (\chi^2-1)^2\Big)dx\\
\displaystyle+\int_0^T\int_0^L \Big(p'_\rho(\rho)\frac{\rho_x^2}{\rho^2}+|u_x|^2+|\mu_x|^2+(3\chi^2-1)\chi_x^2+\frac{1}{\rho}\chi_{xx}^2\Big)dxdt\\
\displaystyle \leq C\Big(\|(\rho_0,\chi_0)\|^2_1+\|\sqrt{\rho_0}u_0\|^2\Big).
\end{array}
\right.
\end{eqnarray}
Now we're going to give the upper and lower bounds of density. According to the method in Kanel'\cite{Kanel1968}, constructing a function
\begin{equation}\label{Kanel function-1}
\Psi(\rho):=\int_{\bar \rho}^\rho\frac{\sqrt{G(s)}}{s^{\frac32}}ds.
\end{equation}
By using the definition of $\bar\rho$ \eqref{average value}, we know that for fixed $t$, there exist a point $x_0\in[0,L]$, $\rho(x_0,t)=\bar\rho$, and one has
\begin{eqnarray}\label{the estimate of Psi}
|\Psi(\rho(x,t))|&=&\Big|\int_{x_0}^x\frac{\partial}{\partial s}\Psi(\rho(s,t))ds\Big|\leq\int_0^L\Big|\frac{G(\rho)^\frac12}{\rho^\frac32}\rho_x\Big|dx\notag\\
&\leq& \Big(\int_0^L G(\rho)dx\Big)^\frac12\Big(\int_0^L\rho\big(\frac{1}{\rho}\big)_x^2dx\Big)^\frac12\leq C(\rho_0,u_0,\chi_0).
\end{eqnarray}
On the other hand, noting that the definition of \eqref{the function G}, one gets
\begin{equation}\label{G-Kanel function}
G(\rho):=
\left\{\begin{array}{llll}
\displaystyle a\Big(\rho\ln\rho-\rho(\ln\bar\rho+1)+\bar\rho\Big)
\left\{\begin{array}{llll}
\thicksim\rho\ln\rho,&\rho\rightarrow 0^+,\\
\thicksim\rho\ln\rho,&\rho\rightarrow +\infty,
\end{array}\right.
\ \ &\gamma=1,\\
\displaystyle a\Big(\frac{\rho^\gamma}{\gamma-1}-\frac{\gamma\bar\rho^{\gamma-1}}{\gamma-1}\rho+\bar\rho^\gamma\Big)
\left\{\begin{array}{llll}
\thicksim\rho,&\rho\rightarrow 0^+,\\
\thicksim\rho^\gamma,&\rho\rightarrow +\infty,
\end{array}\right.
\ \ &\gamma>1,
 \end{array}\right.
\end{equation}
and then
\begin{equation}\label{limit of Kanel function}
\Psi(\rho):=\int_{\bar \rho}^\rho\frac{\sqrt{G(s)}}{s^{\frac32}}ds
 \left\{\begin{array}{llll}
\displaystyle\longrightarrow+\infty,\ &\rho\rightarrow+\infty,\\
 \displaystyle\longrightarrow-\infty,\ &\rho\rightarrow0^+.\end{array}\right.
\end{equation}
Taking advantage of \eqref{the estimate of Psi},   there exist two positive constants $\underline{m}>0$ and $\overline{M}>0$, satisfy
\begin{equation}\label{the lower and upper bounds of density}
 0<\underline{m}\leq\rho\leq\bar{M}<+\infty.
\end{equation}
From \eqref{penalized-NSCH}$_4$, one derives that
\begin{equation}\label{mu}
\frac{\epsilon}{\rho}\chi_{xx}=\frac{1}{\epsilon}\frac{\partial f_\lambda}{\partial\chi}(\chi)-\mu,
\end{equation}
differentiating \eqref{mu} with respect to $x$, one has
\begin{equation}\label{the third derivative for chi}
\epsilon\chi_{xxx}=-\rho\mu_x+\frac{\epsilon\rho_x}{\rho}\chi_{xx}+\frac{\rho}{\epsilon}\frac{\partial^2 f_\lambda}{\partial\chi^2}\chi_x,
\end{equation}
then directly from \eqref{the first energy inequality} and \eqref{the lower and upper bounds of density}, one gets at once
\begin{equation}\label{the third derivative of x for chi}
\int_0^T\|\chi_{xxx}\|^2dt\leq C\Big(\|(\rho_0,\chi_0)\|^2_1+\|u_0\|^2\Big).
\end{equation}
multiplying \eqref{penalized-NSCH}$_2$  by $-\frac{1}{\rho}u_{xx}$, integrating over $\mathbb{R}$ by parts, one gets
\begin{eqnarray}\label{the second derivative of velocity}
 &&\Big(\int\frac{1}{2}u^2_xdx\Big)_t+\nu\int_0^L\frac{1}{\rho}u_{xx}^2dx=\int_0^L u_{xx}\Big(\frac{p'_\rho}{\rho}\rho_x+uu_x+\frac{\epsilon}{\rho}\chi_x\chi_{xx}\Big)dx\notag\\
 &&\leq \frac{\nu}{4}\int_0^L\frac{1}{\rho}u_{xx}^2dx+\frac{c_1}{\nu}\Big(\int_0^L (p'_\rho)^2\frac{\rho_x^2}{\rho}dx+\int\rho u^2u_x^2dx+\int\frac{1}{\rho}\chi_x^2\chi_{xx}^2dx
 \Big)\notag\\
 &&\leq\frac{\nu}{4}\int_0^L\frac{1}{\rho}u_{xx}^2dx+\frac{c_1}{\nu}\Big(\int_0^L (p'_\rho)^2\frac{\rho_x^2}{\rho}dx+\|u_x\|\|u_{xx}\|\int_0^L\rho u^2dx+\|\chi_{x}\|\|\chi_{xx}\|\int_0^L\frac{1}{\rho}\chi^2_{xx}dx\Big)\notag\\
 &&\leq\frac{\nu}{2}\int_0^L\frac{1}{\rho}u_{xx}^2dx+\frac{c_2}{\nu}\Big(\int_0^L p'_\rho(\rho)\frac{\rho_x^2}{\rho^2}dx+\|u_x\|^2+\int_0^L\frac{1}{\rho}\chi^2_{xx}dx\Big),
\end{eqnarray}
integrating over $[0,T]$, from \eqref{the first energy inequality} and \eqref{the third derivative of x for chi}, one has
\begin{eqnarray}\label{the second derivative of velocity-2}
 \sup_{[0,T]}\|u_x(t)\|^2+\int_0^T\|u_{xx}(t)\|^2dt\leq C\Big(\|(\rho_0,u_0)\|^2_1+\|\chi_0\|^2_2\Big).
\end{eqnarray}
Multiplying \eqref{penalized-NSCH}$_3$ by $\chi_t$, integrating over $[0,L]$, one gets
\begin{eqnarray}\label{the derivative of t for chi}
  &&\int_0^L\rho\chi_t^2dx+\int_0^L\rho u\chi_x\chi_tdx=\int_0^L\mu\chi_{xxt}dx,\notag\\
  &&=-\frac{1}{\epsilon}\int_0^L\frac{\partial^2 f_\lambda}{\partial\chi^2}\chi_x\chi_{xt}dx-\frac{\epsilon }{2}\frac{d}{dt}\int_0^L\frac{1}{\rho}\chi_{xx}^2dx+\epsilon\int_0^L(\frac{1}{\rho})_t\chi_{xx}^2dx,
\end{eqnarray}
and consequently
\begin{eqnarray}\label{the derivative of t for chi-1}
  &&\int_0^L\rho\chi_t^2dx+\frac{1}{2\epsilon}\frac{d}{dt}\int_0^L\frac{\partial^2 f_\lambda}{\partial\chi^2}\chi_x^2dx+\frac{\epsilon }{2}\frac{d}{dt}\int_0^L\frac{1}{\rho}\chi_{xx}^2dx\notag\\
  &&=-\int_0^L\rho u\chi_x\chi_tdx+\frac{\epsilon}{2}\int_0^L(\frac{1}{\rho})_t\chi_{xx}^2dx\notag\\
  &&\leq\frac{1}{2}\int_0^L\rho\chi_t^2dx+ C\|\chi_x\|\|\chi_{xx}\|\int_0^L\rho u^2dx+\frac{\epsilon}{2}\Big|\int_0^L\big(\frac{u_x}{\rho}\chi_{xx}^2+\frac{\rho_x u}{\rho^2}\chi_{xx}^2\big)dx\Big|\notag\\
  &&\leq\frac{1}{2}\int_0^L\rho\chi_t^2dx+ C\|\chi_x\|\|\chi_{xx}\|+C\|\chi_{xx}\|^{\frac12}\|\chi_{xxx}\|^{\frac12}\Big(\int_0^L u_x^2dx+\int_0^L\chi_{xx}^2dx\Big)\notag\\
  &&\ \ +C\|\chi_{xx}\|^{\frac12}\|\chi_{xxx}\|^{\frac12}\|u\|^{\frac12}\|u_x\|^{\frac12}\big(\int_0^L\rho_x^2dx+\int_0^L\chi_{xx}^2dx\big)\notag\\
  &&\leq C\Big(\|\chi_x\|^2+\|\chi_{xx}\|^2+\|\chi_{xx}\|^{\frac12}\|\chi_{xxx}\|^{\frac12}\big(\int_0^L u_x^2dx+\int_0^L\rho_x^2dx+\int_0^L\chi_{xx}^2dx\big)\Big)\notag\\
  &&\qquad +\frac{1}{2}\int_0^L\rho\chi_t^2dx,
\end{eqnarray}
that is, combining with  \eqref{the first energy inequality}, \eqref{the third derivative of x for chi} and \eqref{the second derivative formula of f-lambda}, one obtains
\begin{eqnarray}\label{the inequality about derivative of t for chi}
\sup_{[0,T]}\Big(\|\chi_x(t)\|^2+\|\chi_{xx}(t)\|^2\Big)+\int_0^T\|\chi_t(t)\|^2dt\leq C\|\chi_0\|^2_2.
\end{eqnarray}
Differentiating \eqref{penalized-NSCH}$_3$ with respect of $t$, one has
\begin{equation}\label{the higher order derivatives of t for chi}
\rho\chi_{tt}+\rho_t\chi_t+\rho_t u\chi_x+\rho u_t\chi_x+\rho u\chi_{xt}=\mu_{xxt},
\end{equation}
multiplying \eqref{the higher order derivatives of t for chi} by $\chi_t$, integrating over $[0,L]$, one gets
\begin{eqnarray}\label{the energy equality of the higher order derivatives of t for chi}
&&\frac{1}{2}\frac{d}{dt}\int_0^L\rho\chi_t^2dx+\int_0^L\frac{1}{\rho}\chi_{xxt}^2dx\notag\\
&&=-2\int_0^L\rho u\chi_t\chi_{xt}dx+\int_0^L\rho_xu^2\chi_t\chi_xdx+\int_0^L\rho u_xu \chi_x\chi_tdx\notag\\
\!\!\!\!\!\!&&-\int_0^L\rho u_t\chi_x\chi_tdx+\frac{1}{\epsilon}\int_0^L\frac{\partial^2 f_\lambda}{\partial \chi^2}\chi_t\chi_{xxt}dx-\int_0^L\frac{u_x}{\rho}\chi_{xx}\chi_{xxt}dx-\int_0^L\frac{\rho_x}{\rho^2}u\chi_{xx}\chi_{xxt}dx\notag\\
\!\!\!\!\!\!&&\leq\frac{1}{4}\int_0^L\chi_{xxt}^2dx+\frac{1}{4}\int_0^L\rho u_t^2dx+C\int_0^L\big(\chi_t^2+\chi_{xx}^2\big)dx,
\end{eqnarray}
on the other hand, multiplying  \eqref{penalized-NSCH}$_2$ by $u_t$, integrating over $[0,L]$, one gets
\begin{eqnarray}\label{the energy equality of the higher order derivatives of t for velocity}
&&\frac{1}{2}\frac{d}{dt}\int_0^L u^2_xdx+\int_0^L\rho u_{t}^2dx=-\int_0^L\rho u u_x u_tdxdt-\int_0^L p'_\rho\rho_xu_tdxdt-\epsilon\int_0^L\chi_x\chi_{xx}u_tdxdt\notag\\
&&\leq \frac{1}{4}\int_0^L\rho u_t^2dx+C\int_0^L \big(u_x^2+\rho_x^2+\chi_{xx}^2\big)dx,
\end{eqnarray}
summing \eqref{the derivative of t for chi}, \eqref{the energy equality of the higher order derivatives of t for chi} and \eqref{the energy equality of the higher order derivatives of t for velocity} up,
making use of \eqref{the first energy inequality}, \eqref{the third derivative of x for chi}, \eqref{the second derivative of velocity-2} and \eqref{the second derivative formula of f-lambda},
 one obtains
\begin{eqnarray}\label{the energy inequality of the derivatives of t xxt for chi}
&&\sup_{[0,T]}\big(\|\chi_{x}(t)\|_1^2+\|\chi_t(t)\|^2+\|u_x(t)\|^2\big)+\int_0^T\big(\|\chi_t(t)\|^2+\|\chi_{xxt}^2(t)\|^2+\|u_t(t)\|^2\big)dx\notag\\
&&\leq C\Big(\|(\rho_0,u_0)\|^2_1+\|\chi_0\|^2_2+\|\chi_{0t}\|^2\Big).
\end{eqnarray}
Differentiating (\ref{penalized-NSCH})$_3$ with respect of $x$, multiplying by $\mu_{xxx}$, one gets
\begin{equation}\label{the derivative of t for density-1}
  \sup_{[0,T]}\|\chi_{xxx}(t)\|^2+\int_0^T\|\mu_{xxx}\|^2dx \leq  C\|\chi_0-\bar\chi\|^2_3.
\end{equation}
 Directly from \eqref{penalized-NSCH}, by using the estimates \eqref{the first energy inequality}, \eqref{the second derivative of velocity-2}, \eqref{the energy inequality of the derivatives of t xxt for chi}, one has
\begin{eqnarray}\label{the derivative of t for density}
  &&\sup_{[0,T]}\big(\|\rho_t(t)\|^2+\|\mu(t)\|^2+\|\mu_{xx}(t)\|^2\big)+\int_0^T\big(\|\mu_t(t)\|^2+\|\mu_{xxx}(t)\|^2\big)dt\notag\\
  &&\leq C\Big(\|(\rho_0,u_0)\|^2_1+\|\chi_0\|^2_2+\|\chi_{0t}\|^2\Big).
\end{eqnarray}
Differentiating (\ref{penalized-NSCH})$_2$ with respect of $t$, one derives
\begin{eqnarray}\label{the t derivative of u}
  &&\rho_tu_t+\rho u_{tt}+\rho_t uu_x+\rho u_t u_x+\rho u u_{xt}+p_{xt}=\nu u_{xxt}-\epsilon\chi_{xt}\chi_{xx}-\epsilon\chi_{x}\chi_{xxt}.
\end{eqnarray}
Multiplying \eqref{the t derivative of u} by $u_t$, integrating over $[0,L]\times[0,T]$ by parts, making use of \eqref{the first energy inequality}, \eqref{the third derivative of x for chi}, \eqref{the second derivative of velocity-2} and \eqref{the energy inequality of the derivatives of t xxt for chi}, one gets
\begin{eqnarray}\label{the energy equality for t derivative for u}
\sup_{[0,T]}\| u_t(t)\|^2+\int_0^T\int_0^L u_{xt}^2dxdt\leq C\Big(\|(\rho_0-\bar{\rho},u_0)\|^2_1+\|\chi_0-\bar\chi\|^2_2+\|\chi_{0t}\|^2+\|u_{0t}\|^2\Big).
\end{eqnarray}
Differentiating (\ref{penalized-NSCH})$_2$ with respect of $x$, one derives
\begin{eqnarray}\label{the x derivative of u}
 \rho_xu_t+\rho u_{xt}+\rho_x uu_x+\rho u_x u_x+\rho u u_{xx}+p_{xx}=\nu u_{xxx}-\epsilon\chi_{xx}^2-\epsilon\chi_{x}\chi_{xxx}.
\end{eqnarray}
Multiplying \eqref{the x derivative of u} by $u_{xxx}$, integrating over $[0,L]\times[0,T]$ by parts, making use of \eqref{the first energy inequality}, \eqref{the third derivative of x for chi}, \eqref{the second derivative of velocity-2} and \eqref{the energy inequality of the derivatives of t xxt for chi}, \eqref{the energy equality for t derivative for u}, one gets
\begin{eqnarray}\label{the energy equality for x derivative for u}
\sup_{[0,T]}\| u_{xx}(t)\|^2+\int_0^T\int_0^L u_{xxx}^2dxdt\leq C\Big(\|\rho_0\|^2_1+\|(u_0,\chi_0)\|^2_2+\|\chi_{0t}\|^2+\|u_{0t}\|^2\Big).
\end{eqnarray}
Differentiating \eqref{the other form for NSCH-2} with respect of $x$, one derives
\begin{equation}\label{the xx derivative of density}
%  \rho_{xxt}=-\rho_{xxx}u-3\rho_{xx}u_x-3\rho_x u_{xx}-\rho u_{xxx}.
(\rho u)_{xt}+(\rho u^2)_{xx}+p_{xx}=\nu\big[\rho\frac{d}{dt}(\frac{1}{\rho})_{x}+\rho u(\frac{1}{\rho})_{x}\big]_x-\frac{\epsilon}{2}\big(\chi_x^2\big)_{xx},
\end{equation}
Multiplying \eqref{the xx derivative of density} and \eqref{the x derivative of u} by $\big(\frac{1}{\rho}\big)_{xx}$, $u_x$ respectively,  integrating over $[0,L]$ and summing,  one obtains
\begin{eqnarray}\label{the second deritive for density}
&&\sup_{[0,T]}\Big(\|u_x(t)\|^2+\|\rho_{xx}(t)\|^2\Big)+\int_0^T\big(\|\rho_{xx}(\tau)\|^2+\|u_{xx}(\tau)\|^2\big)d\tau\notag\\
&& \leq C\Big(\|\rho_0\|^2_2+\|u_0\|^2_1+\|\chi_0\|_3^2\Big).
\end{eqnarray}
Furthermore,  similar to the proof above,  one has
\begin{equation}\label{the forth derivative of x for chi}
  \sup_{[0,T]}\|\chi_{xxxx}(t)\|^2+\int_0^T\|\mu_{xxxx}\|^2dx \leq  C\|\chi_0-\bar\chi\|^2_4,
\end{equation}
Now we will let $\lambda\rightarrow 0$ in approximate problem \eqref{penalized-NSCH}.
For $0<\lambda<1$, from the results above,  there exists a solution   $(\rho_\lambda,u_\lambda,\chi_\lambda)$ of approximate problem  \eqref{penalized-NSCH}, satisfies
$(\rho_\lambda,u_\lambda,\chi_\lambda)\in X_{0,+\infty,B}([0,+\infty))$.
Multiplying \eqref{penalized-NSCH}$_4$ by $\beta_\lambda(\chi_\lambda)$, integrating over $[0,L]$, one has
\begin{eqnarray}\label{the energy equality of beta-lambda}
  &&\epsilon\int_0^L\rho_\lambda(\chi_\lambda)_x(\beta_\lambda)_xdx+\frac{1}{\epsilon\lambda}\int_0^L\rho_\lambda\beta_\lambda^2dx\notag\\
  &&=\int_0^L\rho_\lambda\mu_\lambda\beta_\lambda dx
  -\int_0^L\frac{\rho_\lambda}{\epsilon}(\chi_\lambda^3-\chi_\lambda)\beta_\lambda dx,\notag\\
  &&\leq C\epsilon\lambda\Big(\int_0^L\rho_\lambda\mu_\lambda^2dx+\frac{1}{\epsilon^2}\int_0^L\rho_\lambda\Big(\frac{\partial f_\lambda}{\partial\chi}(\rho_\lambda,\chi_\lambda)\Big)dx\Big)+\frac{1}{2\epsilon\lambda}\int_0^L\rho_\lambda\beta_\lambda^2dx,
\end{eqnarray}
combining with
\begin{eqnarray}\label{the derivative of beta-lambda}
  \epsilon\int_0^L\rho_\lambda(\chi_\lambda)_x(\beta_\lambda)_xdx&=&\epsilon\int_0^L\rho_\lambda\beta'_\lambda(\chi_\lambda)^2_x dx\notag\\
  &\geq&\epsilon\int_0^L\rho_\lambda(\beta'_\lambda)^2(\chi_\lambda)^2_x dx\geq\epsilon\int_0^L\rho_\lambda(\beta_\lambda)_x^2dx,
\end{eqnarray}
one obtains
\begin{equation}\label{the estimate for beta-lambda}
  \|\beta_\lambda\|_{L^2(0,T;L^2)}\leq C\lambda,\  \textrm{and}\   \|(\beta_\lambda)_x\|_{L^2(0,T;L^2)}\leq C\lambda^\frac12,
\end{equation}
thus, letting $\lambda\rightarrow 0$,  one has $\beta(\chi)=0$,
that is,
\begin{equation}\label{the lower and upper bound of chi}
  -1\leq\chi\leq1.
\end{equation}
Moreover,  by using the compactness theory and the Lions-Aubin argument, combining with the results above,
it is easy to know that the  system \eqref{Euler-NSCH}-\eqref{periodic boundary} has a  strong solution $(\rho,u,\chi)$ when $\lambda\rightarrow 0$.
Therefore, the proof of Proposition \ref{a priori estimate proposition} is completed.
\end{proof}

 Now we will give the proof of  Theorem \ref{main thm-1}. For the global existence,  we only need to get rid of the small condition \eqref{small condition for chi} in the  Proposition \ref{a priori estimate proposition}.  Noting that the small condition for the initial value $\chi$ only used in \eqref{the basic energy equality-2 for  concentration difference} to get the estimate \eqref{the basic energy inequality-2 for density velocity and concentration difference}, rewriting   \eqref{the basic energy equality-2 for  concentration difference} as follows
 \begin{equation}\label{the basic energy equality-2 for  concentration difference without small condition}
  \frac{1}{2}\frac{d}{dt}\int_0^L\rho\chi^2dx+\frac{1}{\epsilon}\int_0^L3\chi^2\chi_x^2dx+\epsilon\int_0^L\frac{1}{\rho}\chi_{xx}^2dx\leq\frac{1}{\epsilon}\int_0^L\chi_x^2dx,
\end{equation}
integrating over $[0,T]$, one has
 \begin{equation}\label{the basic energy estimate-2 for  concentration difference without small condition}
  \frac{1}{2}\int_0^L\rho\chi^2(t)dx+\frac{1}{\epsilon}\int_0^L3\chi^2\chi_x^2dx+\epsilon\int_0^T\int_0^L\frac{1}{\rho}\chi_{xx}^2dx=\frac{1}{\epsilon}\int_0^T\int_0^L\chi_x^2dx+\frac{1}{2}\int_0^L\rho\chi_0^2dx,
\end{equation}
by using \eqref{the basic energy inequality for density velocity and concentration difference} and Gronwall's inequality, one obtains similar energy estimates  without the small condition \eqref{small condition for chi}, but the constant $C$ depending on $T$, and the global existence for \eqref{Euler-NSCH}-\eqref{periodic boundary} is obtained, the uniqueness of this solution can be obtained by the classical method, we omit it. At last, it suffices to present the large time behavior for the solution $(\rho,u,\chi)$ of system \eqref{Euler-NSCH}-\eqref{periodic boundary}. By using the energy inequality \eqref{a priori estimate}, combining with Sobolev embedding theorem for periodic functions, one has at once
\begin{eqnarray}\label{the large time behavior}
 \sup_{x\in[0,L]}\Big|(\rho-\bar\rho,\rho u-\overline{\rho u},\rho\chi-\overline{\rho\chi})(x,t)\Big|&\leq& L \|(\rho_x,(\rho u)_x,(\rho\chi)_x)(t)\|\notag\\
  &\longrightarrow&0,  \ \ \mathrm{as}\ t\rightarrow+\infty,
\end{eqnarray} thus, \eqref{asymptotic stability} is obtained.
Moreover, by the similar way above, we have the proof of Theorem \ref{main thm-2}, the details are omitted here.

\

\noindent {\textbf{Acknowledgments:}  The research of  X. Shi was  partially supported by National Natural Sciences Foundation of China No. 11671027 and 11471321. The research of M. Mei was supported in part by NSERC 354724-2016 and FRQNT grant 256440.}

\end{document}